\definecolor{webgreen}{rgb}{0,.5,0}
\definecolor{webbrown}{rgb}{.6,0,0}
\newcommand{\seqnum}[1]{\href{https://oeis.org/#1}{\rm \underline{#1}}}
\DeclareMathOperator{\per}{per}
\DeclareMathOperator{\cexp}{ce}
\DeclareMathOperator{\CAL}{CAL}
\DeclareMathOperator{\CAN}{CAN}
\def\Enn{\mathbb{N}}
\let\@fnsymbol\@arabic
\begin{document}

\theoremstyle{plain}
\newtheorem{theorem}{Theorem}
\newtheorem{corollary}[theorem]{Corollary}
\newtheorem{lemma}[theorem]{Lemma}
\newtheorem{proposition}[theorem]{Proposition}

\theoremstyle{definition}
\newtheorem{definition}[theorem]{Definition}
\newtheorem{example}[theorem]{Example}
\newtheorem{conjecture}[theorem]{Conjecture}
\newtheorem{problem}[theorem]{Problem}

\theoremstyle{remark}
\newtheorem{remark}[theorem]{Remark}

\newtheorem{observation}[theorem]{Observation}
\title{Complement Avoidance in Binary Words}

\author{
James Currie\footnote{Department of Math/Stats, University of Winnipeg, 515 Portage Ave., Winnipeg, MB, R3B 2E9, Canada; e-mail
\href{mailto:j.currie@uwinnipeg.ca}{\tt j.currie@uwinnipeg.ca}.}
\and
L{\!'}ubom{\'i}ra Dvo\v{r}\'akov\'a\footnote{FNSPE Czech Technical University, 
Prague, Czech Republic;\newline e-mail \href{mailto:lubomira.dvorakova@fjfi.cvut.cz}{\tt lubomira.dvorakova@fjfi.cvut.cz}.}
\and
Pascal Ochem\footnote{LIRMM, CNRS, Universit\'e de Montpellier, France; e-mail \href{ochem@lirmm.fr}{\tt ochem@lirmm.fr}.}
\and
Daniela Opo\v{c}ensk\'a\footnote{FNSPE Czech Technical University, 
Prague, Czech Republic; e-mail
\href{mailto:opocedan@fjfi.cvut.cz}{\tt opocedan@fjfi.cvut.cz}.}
\and
Narad Rampersad\footnote{Department of Math/Stats, University of Winnipeg, 515 Portage Ave., Winnipeg, MB, R3B 2E9, Canada; e-mail
\href{mailto:n.rampersad@uwinnipeg.ca}{\tt n.rampersad@uwinnipeg.ca}.} \and
Jeffrey Shallit\footnote{School of Computer Science,
University of Waterloo,
Waterloo, ON  N2L 3G1,
Canada; e-mail
\href{mailto:shallit@uwaterloo.ca}{\tt shallit@uwaterloo.ca}.}}

\maketitle

\begin{abstract}
The complement $\overline{x}$ of a binary word $x$ is obtained by changing each $0$ in $x$ to $1$ and vice versa.
We study infinite binary words $\bf w$ that avoid sufficiently large complementary factors; that is, if $x$ is a factor of $\bf w$, then $\overline{x}$ is not a factor of $\bf w$.  In particular, we classify such words according to their critical exponents. 
\end{abstract}

\section{Introduction}

Let $x$ be a finite nonempty binary word.  We write $\overline{x}$
for the complementary word, image of the
 morphism that maps
$0 \rightarrow 1$ and $1 \rightarrow 0$, and we write
$x^R$ for the reversal (mirror image) of $x$.
We say $y$ is a {\it factor\/} of a
(one-sided) infinite word $\bf w$
if ${\bf w} = xy{\bf z}$ for a finite
word $x$ and an infinite word $\bf z$. 
In this paper, we are interested in the construction of and properties of infinite binary words $\bf w$ {\it avoiding\/} complementary factors:  that is, if $x$ is a nonempty factor of $\bf w$, then $\overline{x}$ is not.   This is not a new notion; for example, complement avoidance in de Bruijn words was studied by Sawada et al.~\cite{Sawada&Stevens&Williams:2011}.

Evidently it is impossible for an infinite word to avoid complementary factors of {\it all\/} lengths, except in the trivial cases 
$0^\omega = 000\cdots$ and
$1^\omega = 111\cdots$.  A natural question then poses itself: are there such infinite words if the set of exceptions is restricted in some way, say  by length or by cardinality?   And what is the repetition
threshold of such infinite words?
We now turn to repetitions.

We
say that a finite word $w=w[1..n]$ has {\it period\/}
$p\geq 1$ if $w[i]=w[i+p]$ for $1 \leq i \leq n-p$.
The smallest period of a word $w$ is called
{\it the\/} period, and we write it as
$\per(w)$.  The {\it exponent\/} of a finite word
$w$, written $\exp(w)$ is defined to be
$|w|/\per(w)$.   For a real number
$\alpha$, we say a word
(finite or infinite) is
{\it $\alpha$-free\/} if the exponent of all its nonempty factors is $<\alpha$.    We say a word is {\it $\alpha^+$-free\/} if the exponent of all its nonempty factors is $\leq\alpha$.  A word that is $2$-free is also called squarefree, and a word that is $3$-free is also called cubefree.
A word that is $2^+$-free is also called overlap-free.

The {\it critical exponent\/} of
a finite or infinite word $x$ is the supremum, over all
nonempty finite factors $w$ of $x$, of
$\exp(w)$; it is written $\cexp(x)$.  The {\it repetition
threshold} for a language $L$ of infinite words is
defined to be the infimum, over all
$x \in L$, of $\cexp(x)$.

The repetition thresholds for various classes of words have been studied extensively.  To name just a few classes, Dejean \cite{Dejean:1972} determined the repetition threshold for all words over a $3$-letter alphabet, and conjectured its value
for larger alphabets.  Her conjecture attracted a lot of attention, and was finally resolved by Rao
\cite{Rao:2011} and Currie and Rampersad \cite{Currie&Rampersad:2011}, independently.  

Other classes that have been studied include the Sturmian words, studied by Carpi and de Luca
    \cite[Prop.~15]{Carpi&deLuca:2000};
the palindromes, studied in
    \cite{Shallit:2016};
    the rich words, studied by Currie et al.~\cite{Currie&Mol&Rampersad:2020}; the
    balanced words, studied by
    Rampersad et al.~\cite{Rampersad&Shallit&Vandomme:2019} and
    Dvo\v{r}\'akov\'a et al.~\cite{Dvorakova&Opocenska&Pelantova&Shur:2022}; and the
    complementary symmetric
    Rote words, studied by
    Dvo\v{r}\'akov\'a et al.~\cite{Dvorakova&Medkova&Pelantova:2020}.
Other related works include
\cite{Ilie&Ochem&Shallit:2005,Badkobeh&Crochemore:2011,Fiorenzi&Ochem&Vaslet:2011,Samsonov&Shur:2012,Mousavi&Shallit:2013}.

In this paper we study the repetition threshold for two classes of infinite words:
\begin{itemize}
    \item $\CAL_\ell$, the binary words for which there is no length-$\ell$ word
    $x$ such that both $x$ and
    $\overline{x}$ appear as factors;
    \item $\CAN_n$, the binary words for which there are at most $n$ distinct words
    $x$ such that both $x$ and
    $\overline{x}$ appear as factors.
\end{itemize}
It turns out that there is an interesting and subtle hierarchy, depending on the values of $\ell$ and $n$.

Our work is very similar in flavor to that of
\cite{Shallit:2004}, which found a similar hierarchy concerning critical exponents and sizes of squares avoided.  The hierarchy for complementary factors, as we will see, however, is significantly more complex.

We will need the following famous infinite words.

\begin{itemize}
    \item The Fibonacci word ${\bf f} = 0100101001001010010100100 \cdots$,
    fixed point of the morphism $0 \rightarrow 01$, $1 \rightarrow 0$.
    See, for example, \cite{Berstel:1986b}.

    \item The word ${\bf p} = 0121021010210121010210121 \cdots$,
    fixed point of the morphism $\varphi$ sending $0 \rightarrow 01$, $1 \rightarrow 21$, $2 \rightarrow 0$.   This is sequence \seqnum{A287072} in the OEIS.
    Its properties were recently studied in~\cite{Currie&Ochem&Rampersad&Shallit:2022}.
\end{itemize}

The paper is organized as follows.  In Section~\ref{calsec}, we introduce the class $\CAL_\ell$ mentioned above and we establish the hierarchy alluded to previously.  Section~\ref{cansec} does the same thing for the
class $\CAN_n$.  In both cases we need some critical exponent calculations, which are carried out in Section~\ref{critsec}.   Finally, in Section~\ref{threshsec} we study finite words avoiding complementary factors and determine under what conditions there are exponentially many such words.

\section{The class \texorpdfstring{$\CAL_\ell$}{CAL}}
\label{calsec}

In this section, we investigate the repetition threshold for
the class $\CAL_\ell$, the binary words $\bf w$ with the property
that if $x$ is a length-$\ell$ factor of $\bf w$, then $\overline{x}$ is not.   We will need two additional morphisms:  the Thue-Morse morphism $\mu$, which maps
$0 \rightarrow 01$ and $1 \rightarrow 10$, and
the morphism $\psi$, defined as follows:
\begin{align*}
0 & \rightarrow 011001\\
1 & \rightarrow 0\\
2 & \rightarrow 01101 .
\end{align*}

\begin{lemma}
Suppose $\bf x$ is an infinite binary word avoiding $(7/3)$-powers.
Then $\bf x$ contains infinitely many (and hence, arbitrarily large) complementary factors.
\label{7/3}
\end{lemma}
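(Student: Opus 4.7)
The plan is to show that every $(7/3)$-free binary infinite word $\mathbf{x}$ contains, as factors, every factor of the Thue-Morse word $\mathbf{t}$. Since the Thue-Morse morphism $\mu : 0 \mapsto 01,\, 1 \mapsto 10$ satisfies $\mu(\overline{w}) = \overline{\mu(w)}$, the set of factors of $\mathbf{t}$ is closed under complementation. Once this inclusion is in hand, each factor $y$ of $\mathbf{t}$ pairs automatically with its complement $\overline{y}$, which is also a factor of $\mathbf{t}$ and hence of $\mathbf{x}$; since $\mathbf{t}$ has factors of every length, this produces complementary factors of arbitrarily large length in $\mathbf{x}$.

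To establish the inclusion, I would use iterated Thue-Morse desubstitution, which rests on the structural lemma that every $(7/3)$-free binary infinite word $\mathbf{x}$ admits a decomposition $\mathbf{x} = u \cdot \mu(\mathbf{y})$, where $u$ is a prefix of bounded length and $\mathbf{y}$ is again $(7/3)$-free and binary infinite. Iterating $k$ times yields $\mathbf{x} = u_0 \cdot \mu(u_1) \cdots \mu^{k-1}(u_{k-1}) \cdot \mu^k(\mathbf{y}_k)$, with each $\mathbf{y}_k$ a $(7/3)$-free binary infinite word. Each such $\mathbf{y}_k$ contains all four length-$2$ factors $00, 01, 10, 11$: the presence of $00$ and $11$ is forced by $(7/3)$-freeness (otherwise $\mathbf{y}_k$ is pushed into a period-$2$ pattern whose exponent eventually exceeds $7/3$), and $01, 10$ follow from having both letters. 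Thus each of $\mu^k(00), \mu^k(01), \mu^k(10), \mu^k(11)$ is a factor of $\mu^k(\mathbf{y}_k)$, and hence of $\mathbf{x}$. Since $\mathbf{t}$ is a fixed point of $\mu$, every factor of $\mathbf{t}$ of length at most $2^k$ is a factor of $\mu^k(ab)$ for some $a,b \in \{0,1\}$, and therefore also a factor of $\mathbf{x}$. Letting $k \to \infty$ yields the required inclusion.

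The main obstacle is the desubstitution lemma itself. For overlap-free binary words it is the classical Thue--Restivo--Salemi structure theorem, but extending it to the strictly weaker class of $(7/3)$-free binary words requires verifying that the Thue-Morse parsing into $\mu$-blocks remains essentially unique, even in the presence of factors with exponent in $[2, 7/3)$ (such as $001100110$, which has exponent $9/4$). This should reduce to a finite case analysis of the admissible patterns in the run-length encoding $\Delta(\mathbf{x}) \in \{1,2\}^\omega$, using that $(7/3)$-freeness forbids runs of length at least $3$ and rules out certain short periodic patterns in $\Delta$. If a clean desubstitution statement proves awkward, a fallback is to work directly with $\Delta(\mathbf{x})$ and use the forbidden-pattern constraints from $(7/3)$-freeness to show that arbitrarily long factor blocks of $\mathbf{x}$ occur starting at both parities of run position, thereby producing complementary pairs of arbitrary length directly.
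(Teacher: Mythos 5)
Your strategy is sound and ultimately rests on the same engine as the paper's proof, but you propose to rebuild that engine from scratch where the paper simply cites it. The paper's entire argument is: by Karhum\"aki and Shallit, every infinite binary word avoiding $e$-powers for $e\le\tfrac73$ contains $\mu^n(0)=\mu^{n-1}(0)\mu^{n-1}(1)$ for all $n$, and the two halves are already a complementary pair of length $2^{n-1}$. The desubstitution lemma you identify as ``the main obstacle'' is precisely the structure theorem proved in that same reference (every $\tfrac73$-power-free binary word factors as $u\,\mu(y)\,v$ with $|u|,|v|\le 2$ and $y$ again $\tfrac73$-power-free), so you could cite it rather than re-derive it; as written, your proof is incomplete at exactly that point, and the sketch via the run-length encoding is plausible but not carried out. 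Two smaller remarks. First, your argument that each $\mathbf{y}_k$ contains $00$ and $11$ is correct in conclusion but the stated reason is off: avoiding $00$ together with $\tfrac73$-freeness forces the blocks $01$ and $011$ to alternate strictly (since $01010$ has exponent $\tfrac52$ and $0110110$ has exponent $\tfrac73$), pushing the word into the periodic word $(01011)^\omega$ rather than a ``period-$2$ pattern''; the contradiction is the same. Second, once you have the desubstitution you do not need all four length-$2$ factors nor the closure of $\mathcal{L}(\mathbf{t})$ under complementation: it suffices that $\mathbf{y}_k$ contains $01$ or $10$, since $\mu^k(01)=\mu^k(0)\mu^k(1)$ and $\mu^k(1)=\overline{\mu^k(0)}$ already exhibit the complementary pair directly, which is exactly the paper's shortcut. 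What your longer route buys is the genuinely stronger statement that every factor of the Thue--Morse word occurs in $\mathbf{x}$, but that strength is not needed for this lemma.
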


\begin{proof}
By a result of Karhum\"aki and Shallit \cite{Karhumaki&Shallit:2004}, every
infinite binary word avoiding $e$-powers for $e \leq \tfrac73$ contains
$\mu^n (0)$ as a factor for all $n \geq 1$.   Such a word is of the form
$\mu^{n-1}(01) = \mu^{n-1}(0) \mu^{n-1}(1)$, and these two terms are
complementary factors of length $2^{n-1}$.
\end{proof}

By Lemma~\ref{7/3}, the lower limit on the repetition threshold is $\tfrac73$.
For $\ell = 1$ the only such words are $0^\omega$ and $1^\omega$.
For $\ell = 2$ the only such words are $0^\omega$, $1^\omega$, $10^\omega$, and $01^\omega$.
Larger $\ell$ are handled in Theorem~\ref{complem1} below, but to prove it we first need to provide some terminology and a lemma from \cite{Mol&Rampersad&Shallit:2020}.
A morphism $f:\Sigma^*\rightarrow \Delta^*$ is called \emph{$q$-uniform\/} if $|f(a)|=q$ for all $a\in\Sigma$, and is called \emph{synchronizing\/} if for all $a,b,c\in\Sigma$ and $u,v\in \Delta^*$, if $f(ab)=uf(c)v$, then either $u=\varepsilon$ and $a=c$, or $v=\varepsilon$ and $b=c$.
The following result is quoted almost verbatim  from \cite[Lemma 23]{Mol&Rampersad&Shallit:2020}:
\begin{lemma}
Let $a,b\in\mathbb{R}$ satisfy $1<a<b$.  Let $\alpha\in\{a,a^+\}$ and $\beta\in\{b,b^+\}$.  Let $h\colon \Sigma^*\rightarrow \Delta^*$ be a synchronizing $q$-uniform morphism.  Set
\[
t = \max\left(\frac{2b}{b-a},\frac{2(q-1)(2b-1)}{q(b-1)}\right).
\]
If $h(w)$ is $\beta$-free for every $\alpha$-free word $w$ with
$|w|\leq t$, then $h(z)$ is $\beta$-free for every $\alpha$-free word $z \in \Sigma^*$.
\label{mrs}
\end{lemma}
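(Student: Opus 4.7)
The plan is to argue by contradiction. Suppose there is an $\alpha$-free word $z \in \Sigma^*$ such that $h(z)$ fails to be $\beta$-free, and choose a $\beta$-power occurrence $y$ in $h(z)$ that minimizes $|y|$. Writing $p = \per(y)$, minimality together with the definition of $\beta$-freeness pins $|y|$ into a narrow band just above $bp$, of the form $|y| \leq bp + O(1)$, with the precise slack depending on whether $\beta = b$ or $\beta = b^+$. Let $w$ be the shortest factor of $z$ such that $y$ occurs inside $h(w)$. Because $h$ is $q$-uniform and $y$ can discard at most $q-1$ letters at each end of $h(w)$, one has
\[
|w| \;\leq\; \frac{|y| + 2(q-1)}{q}.
\]
Observe that $w$ is automatically $\alpha$-free (it is a factor of $z$), so the hypothesis guarantees $h(w)$ is $\beta$-free as soon as $|w| \leq t$.

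I would then split on whether the period $p$ of $y$ is small or large relative to $q$. In the small-period regime, the minimality bound $|y| \leq bp + O(1)$ plugged into the display above yields $|w| \leq \tfrac{2b}{b-a}$ (up to the $\beta = b^+$ correction), so that $|w| \leq t$ and the hypothesis on short $\alpha$-free words gives $h(w)$ is $\beta$-free, contradicting the occurrence of $y$ in $h(w)$. This is the regime governed by the first term $\tfrac{2b}{b-a}$ in the max.

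In the large-period regime, I would exploit the synchronizing property: once $p$ is large enough compared to $q$, the unique parsing of $h(z)$ into $h$-blocks forced by synchronization means that two consecutive periods of $y$ must induce the same block alignment. This compels $q \mid p$ and descends $y$ to a repetition in $z$ of period $p/q$ and exponent at least $b$. Since $b > a$, this repetition has exponent exceeding $a$ (in particular it is not $<a$ and not $\leq a$), contradicting the $\alpha$-freeness of $z$. The term $\tfrac{2(q-1)(2b-1)}{q(b-1)}$ is precisely the threshold engineered so that whenever $|w|$ exceeds it the descent argument succeeds.

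The principal obstacle is the accounting: one must verify that the two cases, calibrated exactly by $\tfrac{2b}{b-a}$ and $\tfrac{2(q-1)(2b-1)}{q(b-1)}$, jointly cover every possible period $p$, while simultaneously handling the two choices $\alpha \in \{a, a^+\}$ and $\beta \in \{b, b^+\}$ and the small integer corrections they introduce. The synchronization step in the large-period case also requires care: one should check that the period can be genuinely descended (not merely that the image of some candidate factor is periodic), which is exactly what the defining property of a synchronizing morphism delivers once $p$ is past the threshold.
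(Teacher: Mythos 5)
This lemma is not proved in the paper at all---it is quoted from \cite[Lemma 23]{Mol&Rampersad&Shallit:2020}---so there is no in-paper argument to compare against. Your two-case skeleton (a short forbidden repetition is trapped inside $h(w)$ for a factor $w$ of $z$ with $|w|\le t$, where the hypothesis kills it; a long one is pushed down through the synchronizing morphism to a repetition in $z$ that violates $\alpha$-freeness) is the standard proof of lemmas of this type, and the bookkeeping $|w|\le (|y|+2(q-1))/q$ for a $q$-uniform morphism is correct.

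There is, however, a concrete error in your descent step. Once synchronization forces $q\mid p$, the factor of $z$ that inherits period $p/q$ is not all of $w=w_1\cdots w_n$ but only its interior $w_2\cdots w_{n-1}$: the two boundary letters of $w$ have images only partially covered by $y$, and a partial block need not determine its preimage letter. So the descended exponent is not ``at least $b$'' but only about $b(|w|-2)/|w|$, which is strictly less than $b$. The inequality $b(|w|-2)/|w|>a$ is equivalent to $|w|>\tfrac{2b}{b-a}$; in other words, the first term of the max is exactly the correction for this boundary loss, whereas you assign that term to the small-period case and claim the descent yields exponent $\ge b$ outright. Read literally, your argument would prove the lemma with $t=\tfrac{2(q-1)(2b-1)}{q(b-1)}$ alone, which is false. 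The clean organization is to split on $|w|\le t$ versus $|w|>t$: in the latter case, $|w|>\tfrac{2(q-1)(2b-1)}{q(b-1)}$ forces $|y|-p>2(q-1)$, so $y$ contains two aligned full blocks at distance $p$ and synchronization applies, while $|w|>\tfrac{2b}{b-a}$ makes the descended exponent exceed $a$ (with the evident adjustments when $\alpha=a^+$ or $\beta=b^+$).
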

We will use this lemma as follows:  through an exhaustive search, we find an appropriate uniform morphism from $\{0,1,2\}^* \rightarrow \{0,1\}^*$, and then we apply this morphism to an arbitrary ternary squarefree word.  Then we use the fact that there are uncountably many infinite ternary squarefree words, and exponentially many finite ternary squarefree words \cite{Shur:2012}.

We are now ready to state and prove our result on avoiding complementary factors.

\begin{theorem}
There exists an infinite $\beta^+$-free binary word containing
no complementary factors of length $\geq \ell$, for the following
pairs $(\ell, \beta)$. Moreover, this list of pairs is optimal.
\begin{itemize}
    \item[(a)] $(3,2+\alpha)$, where $\alpha=(1+\sqrt{5})/2$.
    \item[(b)] $(5,3)$ 
    \item[(c)] $(7, \tfrac83)$
    \item[(d)] $(8, \tfrac52)$
    \item[(e)] $(11, \gamma')$, where
    $\gamma' \doteq 2.4808627161472369$ is the critical exponent of $\bf p$.
    \item[(f)] $(13,\tfrac73)$.
\end{itemize}
\label{complem1}
\end{theorem}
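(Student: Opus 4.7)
The plan is to prove each of the six pairs separately, splitting each into an existence construction that establishes the upper bound on the critical exponent, and an exhaustive backtracking argument that establishes optimality. The absolute lower bound $\beta \geq 7/3$ needed for case (f) is already supplied by Lemma~\ref{7/3}.

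For existence in cases (a), (b), (c), (d), and (f), the strategy is to invoke Lemma~\ref{mrs}. For each pair $(\ell,\beta)$, a computer search locates a uniform synchronizing morphism $h \colon \{0,1,2\}^* \to \{0,1\}^*$ whose image on every ternary squarefree word is both $\beta^+$-free and free of complementary factor pairs of length $\ell$. Once such an $h$ is found, Lemma~\ref{mrs} reduces the $\beta^+$-free check to verification on the finitely many ternary squarefree words of length at most $t$; the complementary-factor condition reduces analogously, since any violation in $h(w)$ comes from a preimage window in $w$ of bounded length. Applying $h$ to any infinite ternary squarefree word (of which there are uncountably many) then produces the required binary word. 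Case (e) uses the word ${\bf p}$ (or $\psi({\bf p})$, to land over a binary alphabet): its critical exponent $\gamma'$ is computed in Section~\ref{critsec}, and we verify directly, using the $\varphi$-fixed-point structure of ${\bf p}$, that its factor set contains no complementary pair of length $\geq 11$.

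For optimality, we run a depth-first backtracking search for each pair. To show that $\beta$ cannot be decreased at length $\ell$, we enumerate all binary words in $\CAL_\ell$ whose factors have exponent strictly below $\beta$, pruning as soon as either constraint is violated; observing that this tree is finite rules out any infinite word in $\CAL_\ell$ with critical exponent $<\beta$. To show that $\ell$ cannot be decreased at exponent $\beta$, the same procedure is run with $\ell$ replaced by the next smaller value in the hierarchy, confirming that the listed pairs form the Pareto frontier of the $(\ell,\beta)$ trade-off.

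The main obstacle is case (e): $\gamma'$ is an algebraic number without a simple closed form, so both the construction and the optimality search must handle it symbolically (or via a sufficiently precise approximation), and the $\CAL_{11}$-membership of ${\bf p}$ requires a direct combinatorial argument rather than an appeal to Lemma~\ref{mrs}. A secondary difficulty, common to the other cases, is the size of the candidate space for the uniform morphism: locating a working $h$ for each $(\ell,\beta)$ and verifying that it satisfies both the hypotheses of Lemma~\ref{mrs} and the complementary-factor avoidance is a substantial but essentially mechanical computer search.
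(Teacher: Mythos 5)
Your overall architecture (uniform morphisms for existence, finite searches for optimality) matches the paper only for cases (b), (c), and (f); for the remaining cases it breaks down. First, for cases (a) and (d) the proposed construction --- a uniform synchronizing morphism applied to all ternary squarefree words --- cannot succeed, and no computer search will find such a morphism. The paper proves (Theorem~\ref{l=4}) that every bi-infinite $\tfrac{11}{3}$-free word avoiding complementary factors of length $4$ has the same factor set as $h({\bf f})$ or $\overline{h({\bf f})}$; since $2+\alpha<\tfrac{11}{3}$, every word witnessing pair (a) is factor-equivalent to one of these two words of linear complexity, so there is no exponential family of witnesses and in particular no family obtained as images of the uncountably many squarefree ternary words. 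The same obstruction kills (d): by Theorem~\ref{l=8}, every $\tfrac{29}{11}$-free word (hence every $\tfrac52^+$-free word) with no complementary factors of length $8$ has the factor set of one of eight specific morphic words. The paper instead uses a Sturmian word of slope $(5-\sqrt{5})/10$ for (a) and the non-uniform morphic word $\xi({\bf p})$ for (d), the latter requiring the entire bispecial-factor and return-word analysis of Section~\ref{critsec} to establish that its critical exponent is $\tfrac52$ --- machinery your plan does not budget for.

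Second, the backtracking optimality argument fails for (a) and (e). The optimal exponents $2+\alpha$ and $\gamma'$ are irrational suprema that are approached but not attained by the extremal words; consequently the tree of $(2+\alpha)$-free words in $\CAL_3$ (respectively, of $\gamma'$-free words in $\CAL_{11}$) is infinite, since it contains arbitrarily long prefixes of the extremal words themselves. No depth-first search terminates, and no single finite computation can rule out all exponents $\beta'<2+\alpha$ simultaneously. The paper replaces this with rigidity arguments: Theorem~\ref{l=4} for (a), and for (e) a computation showing that the set of length-$100$ factors of any candidate word coincides with that of $\psi({\bf p})$, followed by an appeal to the characterization of ${\bf p}$ by a finite set of forbidden factors from the earlier paper on ${\bf p}$. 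Your backtracking plan is sound only where the threshold exponent is witnessed by a finite factor, i.e., for (b), (c), the exponent bound in (d), and (f), where Lemma~\ref{7/3} already supplies the lower bound.
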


\begin{proof}
\leavevmode
\begin{itemize}
    \item[(a)]
    $(3,2+\alpha)$ is achieved by any Sturmian word $\bf x$ with slope
    $[0,3,1,1,1,1,\ldots] = (5-\sqrt{5})/10$.   As is well-known, a Sturmian word has exactly $n+1$ factors of length $n$, and these factors are independent of the intercept of the Sturmian word.   For intercept $0$, the first $6$ symbols are
    $0 0 1 0 0 0$, and so the four factors of $\bf x$ of length $3$ are $001,010,100,000$, and no complement of these words appears as a factor of $\bf x$.   
    
    On the other hand, we know from 
    the proof of Proposition 15 of
    \cite{Carpi&deLuca:2000} that
    the critical exponent of $\bf x$
    is $2+\alpha$.  Thus, since we can choose the intercept of a Sturmian word to be any real in $[0,1]$, there are uncountably many binary words
    in $\CAL_3$ with critical exponent $2+ \alpha$.
    
    This is best possible, as shown in Theorem~\ref{l=4}.
    

\item[(b)]
$(5, 3)$ is achieved by applying the 17-uniform morphism $h_1$ defined by
\begin{align*}
    0 &\rightarrow 01000101000101001 \\
    1 &\rightarrow 01000101000100100 \\
    2 &\rightarrow 01000101000100010,
\end{align*} 
to any ternary squarefree word $\bf w$.  It is easy, by checking all squarefree words of length $5$, to ensure that $h_1({\bf w})$
contains no complementary factors of length $\geq 6$.
To verify the $3^+$-freeness of these words, we use Lemma~\ref{mrs} with
$\alpha = 2$, 
$\beta = 3^+$, $q = 17$, $t = 6$, and
check that the morphism is indeed synchronizing and that the image of every ternary squarefree
word of length $\leq 6$ is $3^+$-free.  This gives uncountably many infinite binary words and exponentially many finite binary words
with the desired avoidance property.

We use this same technique to verify the $\beta^+$-freeness of every word in this paper that is obtained
with a uniform morphism.

To see that this result is optimal, backtracking easily shows that the longest word that contains no complementary factors of length $\geq 6$ and no cubes is of length $50$, and one example is $$00101001001101001001101001101001001101001101001001.$$

\item[(c)]
$(7, \tfrac83)$ is achieved by applying the 36-uniform morphism
\begin{align*}
    0 &\rightarrow 001001010011001010010011001001010011 \\
    1 &\rightarrow 001001010010011001010011001010010011 \\
    2 &\rightarrow 001001010010011001001010011001010011,
\end{align*} 
to any ternary squarefree word.  This gives uncountably many infinite binary words and exponentially many finite binary words.

The longest word that contains no complementary factors of length $\geq 7$ and no $\tfrac83$-powers
is of 
length $51$ and one example is
$$001001100100110010100110010011001010011001010010100.$$

\item[(d)]
$(8, \tfrac52)$ is achieved by applying the morphism
$\xi$, defined as follows
\begin{align*}
    0 &\rightarrow 01 \\
    1 &\rightarrow 0110 \\
    2 & \rightarrow 1,
\end{align*}
to the infinite word $\bf p$ mentioned above.

The proof that the critical exponent of $\xi({\bf p})$ is $5/2$ is
given in Section~\ref{critsec} starting from Section~\ref{subsect:xi_bispecial factors}.

The longest binary word containing no complementary factors of length $\geq 10$ and no  $\tfrac52$-powers is of length $75$
and one example is\\
$001011010011001011010011011001011010011001011010011011001011010011011001100$.  

\item[(e)]

$(11, \gamma')$ is achieved by the word $\psi({\bf p})$.  In Section~\ref{critsec} we show that the critical exponent of $\psi({\bf p})$ is the same as that for $\bf p$.

For $\ell = 12$ and $e=\gamma'$, the optimality is proved as follows.
Let $z=1001011001$. We can check that $\psi({\bf p})$ avoids $z$
and contains $\overline{z}$, $z^R$, and $\overline{z^R}$.
Let $x$ be the prefix of length $40$ of $\psi({\bf p})$.
A computer check shows that there is no infinite $\tfrac52$-free
binary word with $\ell \leq 12$ that avoids simultaneously
$x$, $\overline{x}$, $x^R$, and $\overline{x^R}$.

By symmetry, we consider a bi-infinite $\tfrac52$-free
binary word $\bf w$ with $\ell \leq 12$ that contains~$x$.
Let $X$ be the set containing the complements
of the factors of length $12$ of $x$.
Thus $\ell \leq 12$ means that $\bf w$ avoids $X$.

We compute the set $S$ of factors $f$ such that $efg$ is $\tfrac52$-free and avoids $X$ and $|e| = |f| = |g| = 100$.
We compute the set $S'$ of factors of $\psi({\bf p})$ of length $100$.
We verify that $S = S'$.
This means that ${\bf w}=\psi({\bf v})$ for some bi-infinite ternary word $\bf v$.
Moreover, by considering the pre-images by $\psi$ of ${\bf w}=\psi({\bf v})$
and $\psi({\bf p})$, this implies that $\bf v$ and $\bf p$ have the same set
of factors of length $100/\max(|\psi(0)|,|\psi(1)|,|\psi(2)|)=16$.
In particular, $\bf v$ avoids the set
$\{00, 11, 22, 20, 212, 0101, 02102, 121012, 01021010, 21021012102\}$
mentioned in \cite[Theorem 14]{Currie&Ochem&Rampersad&Shallit:2022}.

Also, $\bf v$ is cube-free since $\bf w$ is $\tfrac52$-free.
By \cite[Theorem 14]{Currie&Ochem&Rampersad&Shallit:2022},
we know that $\bf v$ has the same set of factors as $\bf p$.
Thus $\bf w$  has the same set of factors as $\psi({\bf p})$.
So, the critical exponent of $\psi({\bf p})$ is optimal for $\ell = 12$.
\item[(f)]



 $(13, \tfrac73)$ is achieved by applying the 69-uniform morphism
\begin{align*}
    0 &\rightarrow 001001100101101001100101100100110100110010110100110010011010011001011 \\
    1 &\rightarrow 001001100101101001100100110100110010110100110010110010011010011001011 \\
    2 &\rightarrow 001001100101101001100100110100110010110010011010011001011010011001011
\end{align*} 
to any ternary squarefree word.  This gives uncountably many infinite binary words and exponentially many finite binary words.

By Proposition~\ref{7/3}, if $e = \tfrac73$, then there are arbitrarily long complemented words.
\end{itemize}

\end{proof}

\section{The class \texorpdfstring{$\CAN_n$}{CAN}}
\label{cansec}

One could also try to minimize the total number $n$ of complemented
words that appear.  Obviously $n$ has to be even.   Recall that
$\CAN_n$ denotes the set of binary words $\bf w$ for which there are at most $n$ distinct words
    $x$ such that both $x$ and
    $\overline{x}$ appear as factors of $\bf w$.
For $n = 0$ the only such infinite words are $0^\omega$ and $1^\omega$.
For $n = 2$ the only such infinite words are $0^\omega$, $1^\omega$, $01^\omega$, and $10^\omega$.
For larger $n$ the situation is summed up in the following theorem:

\begin{theorem}\label{complem2}
There exists an infinite $\beta^+$-free binary word having at most $n$ complemented words, for the following
pairs $(n,\beta)$. Moreover, this list of pairs is optimal.
\begin{itemize}
    \item[(a)] $(4, 2+\alpha)$
    \item[(b)] $(8,3)$
    \item[(c)] $(24,\tfrac83)$
    \item[(d)] $(36,\tfrac52)$
    \item[(e)] $(64,\gamma')$, where
    $\gamma' \doteq 2.4808627161472369$ is the critical exponent of $\bf p$.
    \item[(f)] $(90, \tfrac73)$.
\end{itemize}
\end{theorem}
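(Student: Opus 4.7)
The plan is to leverage Theorem~\ref{complem1} directly. For each pair $(n,\beta)$ in the statement, I would take the infinite word ${\bf w}$ already exhibited in the matching part of Theorem~\ref{complem1}, which is known to be $\beta^+$-free and to avoid every complementary factor of length at least some small $\ell$ (namely $\ell=3,5,7,8,11,13$ in the six cases). Because complementary factors can only arise at lengths $1,2,\ldots,\ell-1$, the total count is a finite quantity, so verifying the bound ``at most $n$ complemented words'' reduces to explicitly enumerating, for each $k<\ell$, the length-$k$ factors of ${\bf w}$ and checking which pairs $\{x,\overline{x}\}$ are both present. For case~(a) the factor structure of Sturmian words makes this immediate; for the morphic words in (b)--(f) one obtains the length-$k$ factor set either by iterating the morphism or, equivalently, by taking windows of length $k$ inside images of short factors of the preimage. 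Case-by-case counting is expected to yield exactly $2+2$ complemented words in case~(a), and analogous short breakdowns summing to $8$, $24$, $36$, $64$, and $90$ in cases (b)--(f).

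For optimality, the natural approach is to transfer optimality from Theorem~\ref{complem1}. If some infinite $\beta^+$-free word had fewer than $n$ complemented words, then it would have only finitely many complementary factors, so there would be some largest length $\ell^\star$ at which a complementary factor occurs. Thus the word would lie in $\CAL_{\ell^\star+1}$, whose minimum critical exponent is controlled by Theorem~\ref{complem1}. By carefully tracking how $\ell^\star$ must decrease as the tally $n$ decreases, one aims at a contradiction with the $\beta$-threshold recorded in Theorem~\ref{complem1}. The intermediate cases (b)--(e) may additionally require a backtracking search over short binary words avoiding a prescribed set of complementary pairs, mirroring the finite-word extremal examples exhibited in the proof of Theorem~\ref{complem1}.

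The hardest step will be case~(f): $\beta=\tfrac73$ is the limiting value beyond which Lemma~\ref{7/3} forces infinitely many complementary factors in any strictly $\tfrac73$-free word. The construction in Theorem~\ref{complem1}(f) is only $\tfrac73^+$-free (attaining exponent exactly $\tfrac73$), and getting exactly $n=90$ complemented words requires a precise tally of length-$k$ factors for each $k\leq 12$ in the image of the $69$-uniform morphism. Ruling out an alternative $\tfrac73^+$-free construction with fewer than $90$ complemented words is likely the most computationally delicate part of the argument, probably requiring an exhaustive computer search in the same spirit as the $\CAL_\ell$ optimality proofs in Theorem~\ref{complem1}. The main obstacle throughout is thus ensuring the counts match the claimed $n$ \emph{exactly} rather than merely being bounded by it, since this tightness is what makes the hierarchy of pairs $(n,\beta)$ form an optimal Pareto frontier.
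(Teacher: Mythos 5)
Your positive half coincides with the paper's: it reuses the words from Theorem~\ref{complem1} and observes that all complemented factors have length less than the corresponding $\ell$, so the count is finite and checkable. The divergence is in the optimality half, and there your main mechanism has a real gap. You propose to reduce to $\CAL_{\ell^\star+1}$, where $\ell^\star$ is the largest length of a complementary pair, and then invoke the thresholds of Theorem~\ref{complem1}. That reduction only works for case (a): a complementary pair of length $\geq 4$ forces at least $8$ complemented words via prefixes, so ``at most $6$'' puts you in $\CAL_4$ and Theorem~\ref{l=4} applies. For the intermediate cases the count-to-length implication is far too loose. For instance, with at most $22$ complemented words a word may still contain a complementary pair of length up to $11$ (prefixes of a length-$11$ pair contribute only $22$ complemented words), whereas Theorem~\ref{complem1} gives the cube threshold only for $\ell \leq 5$. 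So no value of $\ell^\star$ derivable from the count lands you in a regime where Theorem~\ref{complem1} yields the needed lower bound. The paper does not transfer at all in cases (b)--(d): it runs fresh backtracking searches parameterized directly by the count $t$ (e.g., the longest cube-free word with at most $22$ complemented words has length $50$), and in case (e) it reruns the Theorem~\ref{complem1}(e) machinery with the hypothesis ``$\ell \leq 12$'' replaced by ``$t \leq 88$''. You do mention such searches as a possible supplement, but in the actual proof they are the argument, not a fallback.

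You also have the difficulty of case (f) inverted. No exact tally of $90$ and no exhaustive search over alternative $\tfrac73^+$-free constructions is needed there: optimality of $(90,\tfrac73)$ follows immediately from Lemma~\ref{7/3} (any $\tfrac73$-free word has infinitely many complemented words, so no finite budget $t$ admits a better exponent), combined with the case (e) lower bound showing that $t \leq 88$ cannot achieve $\tfrac73^+$; since $n$ must be even, $90$ is forced. The genuinely delicate computation sits in case (e), not (f).
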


\begin{proof}
For the positive part, we use the same words as in Theorem~\ref{complem1}. That is, for every $\beta\in\{2+\alpha, 3, \tfrac83, \tfrac52, \gamma', \tfrac73\}$,
the infinite $\beta^+$-free binary words given in Theorem~\ref{complem1}
to achieve the pair $(\ell,\beta)$ also achieve the pair $(n,\beta)$ in Theorem~\ref{complem2}.
It is not hard to count the complemented factors in such words since their length is less than $\ell$.
Now let us consider the negative part.
\leavevmode
\begin{itemize}
    \item[(a)]
For $4 \leq t \leq 6$ the optimal exponent we can avoid is $2+\alpha$.
To see this note that any such word must avoid having both $x$
and $\overline{x}$ as factors if $|x|\geq 4$, since taking
non-empty prefixes of $x$ and $\overline{x}$ gives at least $8$
complemented factors.  Hence, the smallest exponent that can be avoided
is $2+\alpha$ by Theorem~\ref{l=4}.

\item[(b)] For $8 \leq t \leq 22$ one can avoid $3^+$-powers, and this is optimal. 
The longest word having at most $22$ complemented words, and no cubes  is of length $50$ and one example is\\
$00101001001101001001101001101001001101001101001001$.

\item[(c)] For $24 \leq t \leq 34$ one can avoid $\tfrac83^+$-powers, and this is optimal.
The longest word having at most $34$ complemented words and no $ \tfrac83$-powers is of length $51$ and one example is\\ $001001100100110010100110010011001010011001010010100$.  

\item[(d)] For $36 \leq t \leq 62$ one can avoid $\tfrac52^+$-powers and this is optimal.
The longest word having at most $62$ complemented words and no $ \tfrac52$-powers  is of length $73$ and one example is\\
$0010110100110010110100110110010110100110010110100110110010110100110110010$.

\item[(e)] For $64 \leq t \leq 88$ the optimal exponent that can be avoided is $\gamma'$.
We use the proof of optimality of Theorem~\ref{complem1} (e)
and replace the condition $\ell \leq 12$ by $t \leq 88$.

\item[(f)] For $90 \leq t < \infty$, the optimal  exponent that can be avoided is $\tfrac73^+$.
%
By Proposition~\ref{7/3}, if $e = \tfrac73$, then there are arbitrarily many complemented words.
\end{itemize}

\end{proof}

\newcommand{\LL}{\mathcal L}

\section{Critical exponent of \texorpdfstring{$\xi(\bf{p})$}{xi(p)} and \texorpdfstring{$\psi({\bf p})$}{psi(p)}}
\label{critsec}
The infinite word $\bf p$ is the fixed point of the morphism $\varphi$, where 
\begin{equation}\label{eq:varphi}
\varphi(0) = 01, \varphi(1) = 21, \varphi(2) = 0.
\end{equation}
Therefore, 
${\bf p} = 01 21 0 21 01 0 21 01 21 01 0 21 01 21 0 21 01 21 01 0 \cdots$

The following characteristics of $\bf p$ are known (see \cite{Currie&Ochem&Rampersad&Shallit:2022}):
\begin{itemize}
    \item The factor complexity (number of distinct length-$n$ factors) of $\bf p$ is $2n + 1$. 
    \item The word $\bf p$ is not closed under reversal because $02$ is a factor of $\bf p$, but $20$ is not.
    \item The word $\bf p$ is uniformly recurrent because the morphism $\varphi$ is primitive.
\end{itemize}

The morphism $\psi$ is defined by
\begin{equation}\label{eq:psi}
\psi(0) = 011001,\ \psi(1) = 0,\ \psi(2) = 01101.
\end{equation}

Therefore,
$\psi({\bf p}) = 011001 0 01101 0  011001 01101 0 \cdots$

The morphism $\xi$ is defined by
\begin{equation}\label{eq:xi}
\xi(0) = 01,\ \xi(1) = 0110,\ \xi(2) = 1.
\end{equation}

Hence,
$\xi({\bf p}) = 01 0110 1 0110 01 1 0110 01 0110 01 1 0110 \cdots$

In this section, we compute the critical exponents of $\psi({\bf p})$ and $\xi({\bf p})$ using the lengths of their bispecial factors and their shortest return words. 

First recall the definitions of bispecial and return word. 
Let $\bf u$ be an infinite word and let ${\mathcal L}(\bf u)$ denote the language of all finite factors of $\bf u$.  Then $w\in {\mathcal L}(\bf u)$ is called {\it left special\/} if $aw, bw \in {\mathcal L}(\bf u)$ for two distinct letters $a,b$. A {\it right special\/} factor is defined analogously. The factor $w$ is called {\it bispecial\/} if it is both left special and right special. A factor $r$ of $\bf u$ is a {\it return word \/} to the factor $w$ if $rw \in {\mathcal L}(\bf u)$ and $rw$ contains $w$ exactly twice -- once as a prefix and once as a suffix.  

For a word $u$ over an ordered $d$-letter alphabet $\Sigma$, we define its \emph{Parikh vector} to be the vector of number of occurrences of each letter in $u$. 
\begin{theorem}[\cite{Dolce&Dvorakova&Pelantova:2023}] \label{Asy:computation}
Let $\mathbf{u}$ be a uniformly recurrent aperiodic sequence. Let $(w_n)$ be a sequence of all bispecial factors ordered by their length.
For every $n\in\Enn$, let $r_n$ be a shortest return word to $w_n$ in $\mathbf{u}$. Then
\begin{equation*}    
\cexp(\mathbf{u}) = 1 + \sup\limits_{n \in \Enn} \left\{ \frac{|w_n|}{|r_n|} \right\}\,.
\end{equation*}
\end{theorem}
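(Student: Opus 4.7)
The plan is to prove both inequalities by leveraging the classical dictionary between repetitions in $\mathbf{u}$ and pairs consisting of a bispecial factor and a return word.

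For the lower bound, fix $n$ and consider $r_n w_n \in \LL(\mathbf{u})$. Because this word starts and ends with the same factor $w_n$, its positions $i$ and $i+|r_n|$ carry equal letters whenever both lie inside $r_n w_n$, so $r_n w_n$ has period $|r_n|$. Hence
\[
\exp(r_n w_n) \;=\; \frac{|r_n|+|w_n|}{|r_n|} \;=\; 1 + \frac{|w_n|}{|r_n|},
\]
and taking the supremum over $n$ gives $\cexp(\mathbf{u}) \geq 1 + \sup_n |w_n|/|r_n|$.

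For the upper bound, take an arbitrary factor $u$ of $\mathbf{u}$ with smallest period $p$; we may assume $|u|>p$, otherwise $\exp(u)\leq 1$. Uniform recurrence provides an occurrence of $u$ inside $\mathbf{u}$ with room to extend on both sides, and aperiodicity forbids any one-sided infinite extension of period $p$; hence $u$ has a finite maximal extension $u' = x_1 \cdots x_m \in \LL(\mathbf{u})$ preserving period $p$. Let $w = x_1 \cdots x_{m-p}$; by periodicity $w = x_{p+1}\cdots x_m$ as well, so $w$ appears as both prefix and suffix of $u'$. Maximality forces the letter $b$ immediately following $u'$ in $\mathbf{u}$ to differ from $x_{m-p+1}$, and symmetrically on the left; hence $w$ has two distinct right extensions (by $x_{m-p+1}$ from inside $u'$ and by $b$ from outside) and two distinct left extensions in $\mathbf{u}$. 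Thus $w$ is bispecial, so $w = w_n$ for some~$n$. Moreover, the occurrences of $w$ at positions $1$ and $p+1$ of $u'$ exhibit a return word to $w$ of length at most $p$, giving $|r_n| \leq p$. Combining,
\[
\exp(u) \;=\; \frac{|u|}{p} \;\leq\; \frac{|u'|}{p} \;=\; 1 + \frac{|w_n|}{p} \;\leq\; 1 + \frac{|w_n|}{|r_n|},
\]
and taking the supremum over all factors $u$ yields the matching upper bound.

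The main obstacle lies in the upper bound, specifically in confirming cleanly that the maximal period-$p$ extension $u'$ really is a finite two-sided factor of $\mathbf{u}$, which is precisely where aperiodicity (to rule out an infinite one-sided period-$p$ extension) and uniform recurrence (to place the occurrence of $u$ away from any boundary so that left-extension is possible) are essential. A subsidiary point is that between the occurrences of $w$ at positions $1$ and $p+1$ of $u'$ there may be intermediate occurrences of $w$, so the length-$p$ separation need not itself realize a return word; nevertheless the consecutive pair of occurrences achieving the shortest return word separation has separation at most $p$, which preserves the bound $|r_n|\leq p$.
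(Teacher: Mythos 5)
The paper offers no proof of this theorem --- it is imported verbatim from \cite{Dolce&Dvorakova&Pelantova:2023} --- so there is nothing internal to compare against; your argument is the standard one for this characterization and is essentially correct. The lower bound is fine: $r_nw_n$ has period $|r_n|$, so $\exp(r_nw_n)\ge 1+|w_n|/|r_n|$, which is all that is needed (the equality you assert also holds, since a period $q<|r_n|$ would force a third occurrence of $w_n$ inside $r_nw_n$, contradicting the definition of a return word). In the upper bound, the one step that needs more than you give it is the existence of an occurrence of $u$ whose maximal period-$p$ extension $u'$ is blocked by a mismatch on \emph{both} sides. ``Aperiodicity forbids an infinite one-sided extension'' makes each individual extension finite, but does not by itself rule out that at every occurrence the left extension terminates by running into the left end of the one-sided word $\mathbf{u}$ rather than at a mismatch. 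What you need is a uniform bound $M$ on the lengths of factors of $\mathbf{u}$ having period $p$, after which uniform recurrence supplies an occurrence of $u$ at a position greater than $M$, whose maximal extension necessarily ends in mismatches on both sides. The bound $M$ uses both hypotheses together: if $\mathbf{u}$ had a period-$p$ factor of length exceeding $R+p$, where $R$ is a uniform recurrence constant for the length-$p$ factors, that factor would contain every length-$p$ factor of $\mathbf{u}$ while itself having at most $p$ distinct ones, so $\mathbf{u}$ would be eventually periodic by Morse--Hedlund, a contradiction. With that lemma inserted, the remainder of your upper bound --- that the length-$(|u'|-p)$ prefix $w$ of $u'$ is bispecial, and that two consecutive occurrences of $w$ among its occurrences at positions $1$ through $p+1$ of $u'$ yield a return word of length at most $p$ --- is correct, and the two inequalities combine as you state.
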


\subsection{Bispecial factors in \texorpdfstring{$\bf p$}{p}}
In order to determine bispecial factors in $\psi({\bf p})$ and in $\xi({\bf p})$, we need to explore bispecial factors in $\bf p$. 
First, we will look at the left special factors (LS).
Observing the form of $\varphi$, we can see that every LS has left extensions either $\{0,2\}$, or $\{0,1\}$.
\begin{observation}\label{obs::LSp}
    Let $v \in \LL({\bf p})$, $v \neq \varepsilon$. Then \begin{itemize}
        \item $v$ is LS such that $0v, 2v \in \LL({\bf p})$ if and only if $v$ is a prefix of $1\varphi(w)$, where $w$ is LS such that $0w,1w \in \LL({\bf p})$.        
        \item $v$ is LS such that $0v, 1v \in \LL({\bf p})$ if and only if $v$ is a prefix of $\varphi(w)$, where $w$ is LS such that $0w,2w \in \LL({\bf p})$.
    \end{itemize}
\end{observation}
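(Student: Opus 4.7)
The plan is to reduce both bullets to the unique desubstitution of factors of ${\bf p}$ under $\varphi$. First I would record the elementary constraints on ${\bf p}$ forced by the forbidden factors listed in Theorem~14 of \cite{Currie&Ochem&Rampersad&Shallit:2022}: from $00, 11, 22, 20 \notin \LL({\bf p})$ one deduces that each letter $0$ in ${\bf p}$ is preceded by $1$, each $1$ by $0$ or $2$, and each $2$ by $0$ or $1$; moreover, the only successor of a $2$ in ${\bf p}$ is $1$. Two consequences follow. First, every left-special factor of ${\bf p}$ has left extension set either $\{0,1\}$ or $\{0,2\}$, since $\{1,2\}$ is ruled out by the fact that no nonempty factor of ${\bf p}$ admits this set (the successor of $1$ lies in $\{0,2\}$ while the successor of $2$ equals $1$, and these sets are disjoint); and such a factor starts with $2$ in the first case, with $1$ in the second. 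Second, $\varphi$ is recognizable on ${\bf p}$: each occurrence of $0$ or $2$ in ${\bf p}$ begins a $\varphi$-block, so any factor of ${\bf p}$ starting with $0$ or $2$ has a unique decomposition $\varphi(w)q$ with $q$ a proper prefix of some $\varphi(a)$, and in fact $q \in \{\varepsilon, 0, 2\}$ with $a$ determined by $q$ (namely $a = 0$ for $q = 0$ and $a = 1$ for $q = 2$).

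For the sufficiency directions, I would apply $\varphi$ directly. If $w$ is LS with $0w, 1w \in \LL({\bf p})$, then $\varphi(0w) = 0 \cdot 1\varphi(w)$ and $\varphi(1w) = 2 \cdot 1\varphi(w)$ are factors of ${\bf p}$, so $1\varphi(w)$ and each of its nonempty prefixes have both $0$ and $2$ as left extensions. The dual statement for the second bullet follows from $\varphi(0w) = 0 \cdot 1\varphi(w)$ and $\varphi(2w) = 0 \cdot \varphi(w)$, which exhibit $\varphi(w)$ as preceded in ${\bf p}$ by both $1$ and $0$.

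For the necessity directions I would use recognizability. Take $v$ LS with extensions $\{0,2\}$; by the preliminary observation $v = 1v'$. The factors $0v = \varphi(0)v'$ and $2v = \varphi(1)v'$ show that $v'$ begins at a $\varphi$-block boundary, so $v' = \varphi(w)q$ for some $w$ and some proper prefix $q$ of $\varphi(a)$. Reading off $0v = \varphi(0w)q$ and $2v = \varphi(1w)q$ yields $0wa, 1wa \in \LL({\bf p})$ when $q \neq \varepsilon$ and $0w, 1w \in \LL({\bf p})$ when $q = \varepsilon$; taking $w'$ to be $wa$ or $w$ accordingly produces an LS factor with extensions $\{0,1\}$, and by construction $v$ is a prefix of $1\varphi(w')$. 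The second bullet is analogous: $v$ begins with $2$, hence at a $\varphi$-block boundary, so $v = \varphi(w)q$; the factor $0v = \varphi(2w)q$ produces $2wa$ (or $2w$ when $q = \varepsilon$) in $\LL({\bf p})$, while the factor $1v$ is witnessed by one of $01v = \varphi(0w)q$ or $21v = \varphi(1w)q$. The main obstacle here is that a priori only the second witness $\varphi(1w)q$ might occur, leaving $w'$ with left extension set $\{1,2\}$; I would close this gap by appealing to the preliminary fact that no nonempty factor of ${\bf p}$ admits the extension set $\{1,2\}$, forcing $\varphi(0w)q$ to occur and hence $0wa \in \LL({\bf p})$ (or $0w$ when $q = \varepsilon$), which finishes the characterization.
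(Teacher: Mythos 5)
Your overall strategy---recognizability of $\varphi$ (every $0$ or $2$ in ${\bf p}$ begins a $\varphi$-block, every $1$ is the second letter of one) combined with desubstitution---is the natural route to this statement, which the paper records as an observation without proof. Your preliminary analysis of the admissible extension sets and both sufficiency (``if'') directions are correct.

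The necessity (``only if'') direction, however, has a genuine gap: the decomposition $v'=\varphi(w)q$ is \emph{not} unique, contrary to what you assert. A trailing $0$ can be read either as the complete block $\varphi(2)$ (so $q=\varepsilon$ and $w$ ends in $2$) or as the first letter of a block $\varphi(0)$ (so $q=0$, $a=0$, and $w$ is one letter shorter), and the occurrences of $0v$ and $2v$ need not align with the same choice. Concretely, take the bispecial factor $v=1\varphi^2(10)0=102101210$, which is left special with extensions $\{0,2\}$; here $v'=02101210=\varphi(21012)=\varphi(2101)\cdot 0$. With the parse $q=\varepsilon$, $w=21012$, your step ``reading off $2v=\varphi(1w)q$ yields $1w\in\LL({\bf p})$'' would give $121012\in\LL({\bf p})$, which is false: $121012$ is among the forbidden factors of ${\bf p}$ listed in the paper. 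In fact every occurrence of $2v$ parses the trailing $0$ as the start of $\varphi(0)$ and only witnesses $121010\in\LL({\bf p})$, while an occurrence of $0v$ may parse it as $\varphi(2)$ and witness $021012$. So the two hypotheses only yield $0\tilde{w}x$ and $1\tilde{w}y$ in $\LL({\bf p})$ with possibly $x\neq y$ (here $\tilde{w}=2101$, $x=2$, $y=0$), and extracting a single $w'$ with both $0w'$ and $1w'$ in $\LL({\bf p})$ requires an additional argument showing that $0\tilde{w}y$ or $1\tilde{w}x$ is also a factor (here $021010\in\LL({\bf p})$, so $w'=21010$ works and $v$ is indeed a prefix of $1\varphi(21010)$). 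That reconciliation step is missing, and the same trailing-$0$ ambiguity affects the second bullet. The one alignment issue you do address---that $1v$ might a priori be witnessed only by $21v$---is handled correctly by the ``no extension set $\{1,2\}$'' argument, but that argument does not repair the gap just described.
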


Second, we will look at the right special factors (RS).
By the definition of $\varphi$, every RS has right extensions either $\{1,2\}$, or $\{0,2\}$. 

\begin{observation}\label{obs::RSp}
    Let $v \in \LL({\bf p})$, $v \neq \varepsilon$. Then
    \begin{itemize}
        \item $v$ is RS such that $v1, v2 \in \LL({\bf p})$ if and only if $v$ is a suffix of $\varphi(w)0$, where $w$ is RS such that $w0,w2 \in \LL({\bf p})$.        
        \item $v$ is RS such that $v0, v2 \in \LL({\bf p})$ if and only if $v$ is a suffix of $\varphi(w)$, where $w$ is RS such that $w1,w2 \in \LL({\bf p})$.
    \end{itemize}
\end{observation}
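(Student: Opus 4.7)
The plan is to prove both equivalences by combining a direct application of $\varphi$ (for $(\Leftarrow)$) with a desubstitution argument based on the $\varphi$-factorization of $\mathbf{p}$ (for $(\Rightarrow)$). I will repeatedly use three structural facts: every $2$ in $\mathbf{p}$ is immediately followed by $1$ (since $2$ appears only as the first letter of $\varphi(1)=21$); the factors $00$, $11$, $20$, $22$, $0101$, and $212$ are all absent from $\LL(\mathbf{p})$ (the first four from the morphism, the last two from the list in \cite{Currie&Ochem&Rampersad&Shallit:2022}); and from any known $\varphi$-block boundary in $\mathbf{p}$, the factorization is reconstructible backwards by the deterministic rule that a letter $1$ ends a length-$2$ block, a letter $0$ ends a length-$1$ block, and $2$ cannot end a block.

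For the $(\Leftarrow)$ directions I would simply apply $\varphi$. In part~1, from $w0, w2 \in \LL(\mathbf{p})$ I obtain $\varphi(w)\,01 \in \LL(\mathbf{p})$, and using $2 \to 1$ the extension $w21$ gives $\varphi(w)\,021 \in \LL(\mathbf{p})$, so $\varphi(w)\,0$ is right special with extensions $\{1,2\}$ and any suffix $v$ inherits. In part~2, $w1, w2 \in \LL(\mathbf{p})$ directly yield $\varphi(w)\,21$ and $\varphi(w)\,0$, making $\varphi(w)$ right special with extensions $\{0,2\}$.

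For the $(\Rightarrow)$ direction of part~1, I first pin down the last letters of $v$: $v$ cannot end in $1$ or $2$ (else $v1$ or $v2$ forces $11$ or $22$ into $\LL(\mathbf{p})$), so $v$ ends in $0$; moreover, for $|v|\geq 3$ the forbidden $0101$ rules out $v$ ending in $010$, so $v$ ends in $210$ (the small cases reduce to $v\in\{0,10\}$ and are handled by direct inspection). In any occurrence of $v\cdot 21$ in $\mathbf{p}$ the terminal $021$ forms a $\varphi(2)\varphi(1)$ pair, and in any occurrence of $v\cdot 1$ the terminal $01$ is $\varphi(0)$; in both, the position immediately after $v[1..|v|-1]$ is a $\varphi$-block boundary. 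Backward reconstruction from this boundary determines the interior block structure of $v[1..|v|-1]$ uniquely, and when the leading block is a priori ambiguous (only possible when $v[1]=1$) the forbidden factors of $\mathbf{p}$ (especially $00$ and $11$) select one of the two leading preimages, giving a well-defined factor $w$. Reading off the extending preimage letter in each occurrence yields $w0 \in \LL(\mathbf{p})$ (from the $v\cdot 1$ occurrence, preimage $0$) and $w2 \in \LL(\mathbf{p})$ (from the $v\cdot 21$ occurrence, preimage $2$), with $v$ a suffix of $\varphi(w)\,0$.

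For the $(\Rightarrow)$ direction of part~2, analogous elimination using $00, 20 \notin \LL(\mathbf{p})$ forces $v$ to end in $1$, so both extensions see $v$ ending at a $\varphi$-block boundary. The same backward-reconstruction argument produces a preimage $w$ with $v$ a suffix of $\varphi(w)$. The $v\cdot 2$ occurrence gives $w1 \in \LL(\mathbf{p})$ (the $2$ starts a $\varphi(1)$-block), while the $v\cdot 0$ occurrence gives $w0$ or $w2 \in \LL(\mathbf{p})$ depending on whether the $0$ starts a $\varphi(0)$- or $\varphi(2)$-block. Combining $w1 \in \LL(\mathbf{p})$ with $w$'s role as a preimage of a factor ending in $1$ forces $w$ to end in $0$, and then $00 \notin \LL(\mathbf{p})$ rules out $w0 \in \LL(\mathbf{p})$, so we conclude $w2 \in \LL(\mathbf{p})$. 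The main obstacle in both parts is the backward-reconstruction step identifying a consistent preimage $w$ across different occurrences of $v$; once that is in hand, the remaining verifications are bookkeeping with the three structural facts.
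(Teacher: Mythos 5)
The paper offers no proof of this Observation at all---it is asserted as an immediate consequence of the shape of $\varphi$---so your desubstitution argument is the natural way to try to justify it, and your $(\Leftarrow)$ directions and the elimination of final letters in the $(\Rightarrow)$ directions are correct. The gap is in the step you yourself flag as the main obstacle: producing \emph{one} preimage $w$ that works for \emph{both} right extensions. Backward reconstruction does determine the interior block structure of $v$ uniquely, but when $v$ begins with a $1$ that is the tail of a block, the leading preimage letter is $0$ or $1$ according to whether that occurrence of $v$ is preceded by $0$ or by $2$ in $\bf p$; it is a property of the occurrence, not of $v$. Your claim that the forbidden factors $00$ and $11$ select the leading preimage only helps when the first full block of $v$ is $\varphi(0)$ or $\varphi(1)$; when it is $\varphi(2)=0$ (i.e., $v$ begins with $10$), both candidate preimages begin $02\cdots$ and $12\cdots$, and neither is excluded. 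This is exactly the situation for the bispecial factors of the form $1\varphi(u)0$ in the $\mathcal{C}$-chain: for instance $v=1\varphi^2(10)0=102101210$ is left special with left extensions $\{0,2\}$, so occurrences with leading preimage $0$ and with leading preimage $1$ both genuinely exist, and here $021012\in\LL({\bf p})$ while $121012\notin\LL({\bf p})$ (the latter is one of the forbidden factors of $\bf p$, not a consequence of avoiding $00$ or $11$). So the occurrence of $v\cdot 21$ forces the preimage $02101\cdot 2$, while the occurrence of $v\cdot 1$ could a priori yield $12101\cdot 0$ rather than $02101\cdot 0$, and your argument as written does not rule this out.

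Concretely, what is missing is a proof that some single left context $c$ of $v$ admits both right extensions (e.g.\ $cv1,cv2\in\LL({\bf p})$ in part~1), equivalently some control on the extension sets of the bispecial factors of $\bf p$; without that, ``reading off the extending preimage letter in each occurrence'' may produce two words $w$ and $w''$ differing in their first letter, with only $w''0$ and $w2$ known to lie in $\LL({\bf p})$. The same issue occurs at the end of your part~2, where $u1\in\LL({\bf p})$ and $u'2\in\LL({\bf p})$ are obtained for possibly different $u,u'$. To close the gap you need either an analysis of the bilateral extensions of the bispecial factors (which is close to circular here, since Corollary~\ref{coro:BS} is what these observations are meant to feed), or a different global argument such as establishing that the right special factors of each extension type form a single chain ordered by the suffix relation.
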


It follows from the form of left and right special factors that we have at most 4 possible kinds of nonempty bispecial factors in $\bf p$. The following statements are obtained when combining Observations~\ref{obs::LSp} and \ref{obs::RSp}.
\begin{corollary}\label{coro:BS}
    Let  $v \in \LL({\bf p})\setminus\{\varepsilon\}$ be a bispecial factor in $\bf p$.
    \begin{enumerate}
        \item If $0v,2v, v0, v2 \in \LL({\bf p})$, there exists $w$ such that $v = 1\varphi(w)$ and $0w,1w, w1,w2 \in \LL({\bf p})$.
        \item If $0v,1v, v1, v2 \in \LL({\bf p})$, there exists $w$ such that $v = \varphi(w) 0$ and $0w,2w, w0,w2 \in \LL({\bf p})$.
        \item If $0v,2v,v1,v2 \in \LL({\bf p})$, there exists $w$ such that $v =1\varphi(w)0$ and $0w,1w, w0,w2\in \LL({\bf p})$.  
        \item If $0v,1v, v0, v2 \in \LL({\bf p})$, there exists $w$ such that $v = \varphi(w)$ and $0w,2w, w1,w2\in \LL({\bf p})$. 
    \end{enumerate}
\end{corollary}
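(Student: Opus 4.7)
The plan is to derive Corollary~\ref{coro:BS} by combining Observations~\ref{obs::LSp} and~\ref{obs::RSp} case by case, then carrying out a short desubstitution step to promote the resulting ``prefix of'' and ``suffix of'' statements into equalities. A nonempty bispecial $v$ has left extensions either $\{0,2\}$ or $\{0,1\}$ and right extensions either $\{0,2\}$ or $\{1,2\}$, so the four combinations correspond exactly to the four items of the corollary. In each combination, Observation~\ref{obs::LSp} supplies a left-special $u_1$ with $v$ a prefix of $1\varphi(u_1)$ or of $\varphi(u_1)$, and Observation~\ref{obs::RSp} supplies a right-special $u_2$ with $v$ a suffix of $\varphi(u_2)0$ or of $\varphi(u_2)$.

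The desubstitution uses the fact that the $\varphi$-decomposition of $\bf p$ is unique: since $\varphi(0)=01$, $\varphi(1)=21$, $\varphi(2)=0$ all start in $\{0,2\}$, every occurrence of the letter $1$ in $\bf p$ is the second letter of a $\varphi(0)$- or $\varphi(1)$-block, and every $2$ begins a $\varphi(1)$-block. The extensions of $v$ then pin down its alignment relative to these block boundaries: left extensions $\{0,2\}$ force $v$ to begin with an interior $1$ (so that its sequel starts on a boundary), whereas $\{0,1\}$ forces $v$ itself to begin on a boundary; dually, right extensions $\{1,2\}$ force $v$ to end on the leading $0$ of a $\varphi(0)$- or $\varphi(2)$-block, whereas $\{0,2\}$ forces $v$ to end on a boundary. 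A short check against the list of forbidden two-letter factors $\{00,11,20,22\}$ rules out the remaining misalignments (for instance in case~1, the alternatives $v=1\varphi(w)0$ or $1\varphi(w)2$ would force $00$ or $20$ as a factor), and yields respectively $v=1\varphi(w)$, $\varphi(w)0$, $1\varphi(w)0$, or $\varphi(w)$ for some factor $w$ of $\bf p$.

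The claimed four-letter extension set for $w$ then follows by propagating the extensions of $u_1$ and $u_2$ back through $\varphi$. In each case, $w$ appears as a prefix of $u_1$, so the left extensions of $u_1$ transfer to $w$ via identities like $a\cdot 1\varphi(w)=\varphi(a)\varphi(w)=\varphi(aw)$ for $a\in\{0,1\}$; and $w$ appears as a suffix of $u_2$, obtained by desubstituting the shared occurrence in $\varphi(u_2)$, so the right extensions of $u_2$ transfer to $w$ analogously. The main obstacle is the bookkeeping of this desubstitution in each of the four cases; once the alignments are recorded, the four parts of the corollary come out in parallel from the two observations.
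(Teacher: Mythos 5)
Your proposal is correct and follows essentially the same route as the paper, which derives the corollary precisely by combining Observations~\ref{obs::LSp} and~\ref{obs::RSp} in the four possible extension-type combinations; the desubstitution details you supply (using the forbidden factors $00,11,20,22$ to pin $v$'s endpoints to block boundaries of the $\varphi$-decomposition and to transfer the extensions of $u_1$ and $u_2$ to $w$) are exactly the bookkeeping the paper leaves implicit. No gap.
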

It follows from Corollary~\ref{coro:BS} that all bispecial factors may be constructed starting from the shortest ones in the following way:
$1$ is the shortest bispecial factor with left extensions $\{0,2\}$ and right extensions $\{0,2\}$. Applying the morphism $\varphi$, we obtain the bispecial factor $\varphi(1)0$ with left extensions $\{0,1\}$ and right extensions $\{1,2\}$. The second application of $\varphi$ gives us the bispecial factor $1\varphi^2(1)\varphi(0)$ with left extensions $\{0,2\}$ and right extensions $\{0,2\}$. This process can be iterated infinitely many times
$$1 \to \varphi(1)0 \to 1\varphi^2(1)\varphi(0) \to \varphi(1)\varphi^3(1)\varphi^2(0)0 \to 1\varphi^2(1)\varphi^4(1)\varphi^3(0)\varphi(0) \to \cdots  $$

Similarly, when starting with the bispecial factor 10 with left extensions $\{0,2\}$ and right extensions $\{1,2\}$, we obtain after application of $\varphi$ the bispecial factor $\varphi(10)$ with left extensions $\{0,1\}$ and right extensions $\{0,2\}$. After the second application of $\varphi$, we have the bispecial factor $1\varphi^2(10)0$ with left extensions $\{0,2\}$ and right extensions $\{1,2\}$. We continue analogously and complete thus the list of all bispecial factors
$$10 \to \varphi(10) \to 1\varphi^2(10) 0 \to \varphi(1)\varphi^3(10)\varphi(0) \to 1\varphi^2(1)\varphi^4(10)\varphi^2(0)0\to \cdots$$

The following statement is an immediate consequence of Corollary~\ref{coro:BS}.

\begin{proposition}\label{prop:BS}
    Let $w \in {\mathcal L}({\bf p})\setminus \{\varepsilon\}$. If $w$ is a bispecial factor, then $w$ has one of the following forms: 
    \begin{itemize}
        \item[$\mathcal{A}$)]\label{BS:A} $$w_A^{(n)} = 1\varphi^2(1)\varphi^4(1)\cdots \varphi^{2n}(1)\varphi^{2n-1}(0)\varphi^{2n-3}(0)\cdots \varphi(0)\quad \quad \text{for $n \geq 1$.}$$  
        If $n = 0$, then we set $w_A^{(0)} = 1$.

        The Parikh vector of $w_A^{(n)}$ is the same as of the factor $1\varphi(012)\varphi^3(012)\cdots \varphi^{2n-1}(012)$.
        
        \item[$\mathcal{B}$)]\label{BS:B} $$w_B^{(n)} = \varphi(1)\varphi^3(1)\cdots \varphi^{2n+1}(1)\varphi^{2n}(0)\varphi^{2n-2}(0)\cdots \varphi^2(0)0 \quad \quad \text{for $n \geq 0$.}$$  

        The Parikh vector of $w_B^{(n)}$ is the same as of the factor $012\varphi^2(012)\varphi^4(012)\cdots \varphi^{2n}(012)$.

        \item[$\mathcal{C}$)]\label{BS:C} $$w_C^{(n)} = 1\varphi^2(1)\varphi^4(1)\cdots \varphi^{2n}(1)\varphi^{2n}(0)\varphi^{2n-2}(0)\cdots \varphi^2(0)0 \quad \quad \text{for $n \geq 0$.}$$ 
        
        The Parikh vector of $w_C^{(n)}$ is the same as of the factor $01\varphi^2(01)\varphi^4(01)\cdots \varphi^{2n}(01)$.

        \item[$\mathcal{D}$)]\label{BS:D} $$w_D^{(n)} = \varphi(1)\varphi^3(1)\cdots \varphi^{2n+1}(1)\varphi^{2n+1}(0)\varphi^{2n-1}(0)\cdots \varphi(0)\quad \quad \text{for $n \geq 0$.}$$         

        The Parikh vector of $w_D^{(n)}$ is the same as of the factor $\varphi(01)\varphi^3(01)\cdots \varphi^{2n+1}(01)$.
    \end{itemize}
\end{proposition}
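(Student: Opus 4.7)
The plan is to prove the classification by strong induction on $|w|$, using Corollary~\ref{coro:BS} as the inductive engine. Every nonempty bispecial factor of $\bf p$ has one of four extension types, which I label $A = (\mathrm{L}{:}\{0,2\},\mathrm{R}{:}\{0,2\})$, $B = (\mathrm{L}{:}\{0,1\},\mathrm{R}{:}\{1,2\})$, $C = (\mathrm{L}{:}\{0,2\},\mathrm{R}{:}\{1,2\})$, $D = (\mathrm{L}{:}\{0,1\},\mathrm{R}{:}\{0,2\})$, corresponding exactly to cases~1--4 of Corollary~\ref{coro:BS}. The Corollary expresses each such factor as a $\varphi$-image of a smaller bispecial factor (possibly with a prefix $1$ and/or a suffix $0$ attached), and the resulting transitions between types decompose into two independent cycles: $A \leftrightarrow B$ via cases~1 and~2, and $C \leftrightarrow D$ via cases~3 and~4. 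The four families $w_A^{(n)}, w_B^{(n)}, w_C^{(n)}, w_D^{(n)}$ in the statement will turn out to be the orbits of the two minimal bispecial factors under these cycles.

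Next I would isolate the base cases. A direct enumeration of factors of $\bf p$ of length at most $2$ confirms that the only nonempty bispecial factors of length $\leq 2$ are $1$ (type $A$) and $10$ (type $C$), matching $w_A^{(0)}$ and $w_C^{(0)}$; these are genuine seeds, since case~1 of the Corollary with $w = \varepsilon$ produces $v = 1\varphi(\varepsilon) = 1$ and case~3 with $w = \varepsilon$ produces $v = 1\varphi(\varepsilon)0 = 10$. Types $B$ and $D$ have no seeds of their own, because the empty preimage in cases~2 and~4 would produce $v = 0$ or $v = \varepsilon$, neither of which is a nonempty bispecial factor of $\bf p$.

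For the inductive step, given $v$ bispecial of type $A$ with $|v| > 1$, case~1 of the Corollary writes $v = 1\varphi(w)$ for some bispecial $w$ of type $B$ with $|w| < |v|$, so by induction $w = w_B^{(n-1)}$ for some $n \geq 1$; substituting the explicit formula for $w_B^{(n-1)}$ and simplifying via $\varphi(1) = 21$ and $\varphi(0) = 01$ produces exactly $w_A^{(n)}$. The analogous substitutions via cases~2, 3, 4 treat types $B$, $C$, $D$. Strict length-decrease in each step is guaranteed because $\varphi$ is non-erasing and no nonempty bispecial factor of $\bf p$ consists only of the letter $2$ (since $22 \notin \LL({\bf p})$). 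The Parikh vector claims then reduce to two telescoping identities from $\varphi(1) = 21$ and $\varphi(0) = 01$: for every $k \geq 1$, the Parikh vector of $\varphi^k(1)$ equals the sum of those of $\varphi^{k-1}(1)$ and $\varphi^{k-1}(2)$, and the Parikh vector of $\varphi^k(0)$ equals the sum of those of $\varphi^{k-1}(0)$ and $\varphi^{k-1}(1)$. Telescoping these over the explicit products in the four families collapses each Parikh vector into that of the advertised compact product $\prod_k \varphi^{2k-1}(012)$, $\prod_k \varphi^{2k}(012)$, etc.

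I expect the main obstacle to lie in the careful base-case enumeration together with the bookkeeping of how the left and right extension sets propagate through one level of $\varphi$: one must be sure not to miss a short anomalous bispecial factor and not to confuse the four extension types when applying Corollary~\ref{coro:BS}. Once the seeds are fixed and the two cycles are correctly identified, the inductive unrolling of the four explicit formulas and the Parikh telescoping are routine.
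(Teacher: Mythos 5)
Your proposal is correct and follows essentially the same route as the paper: the paper derives the proposition as an immediate consequence of Corollary~\ref{coro:BS}, seeding the iteration at the two shortest bispecial factors $1$ and $10$ and unrolling the two alternating chains $1 \to \varphi(1)0 \to 1\varphi^2(1)\varphi(0) \to \cdots$ and $10 \to \varphi(10) \to 1\varphi^2(10)0 \to \cdots$, exactly your two cycles $A\leftrightarrow B$ and $C\leftrightarrow D$. Your strong induction merely formalizes what the paper leaves as "immediate," and your base-case enumeration and Parikh telescoping match the paper's intent.
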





\subsection{The shortest return words in 
\texorpdfstring{$\bf p$}{p}}


Let us derive the form of the shortest return words to all bispecial factors in $\bf p$. 
We will make use of them when solving the same problem for $\psi({\bf p})$ and $\xi({\bf p})$. 
\begin{itemize}
    \item The return words to $\varepsilon$ are 0, 1, 2. 
    \item The return words to 1 are 12, 102, 10.
    \item The return words to $10$ are $10$, $102$, $1012$. The shortest one is $10$ and it is a prefix of all of them. 
    \item The return words to $\varphi(1)0$ are $210 = \varphi(1)0$, $21010$, $2101$. The shortest one is $210$ and it is a prefix of all of them.
\end{itemize}

Using this knowledge and the knowledge of how the bispecial factors can be constructed, we obtain the following observation for the shortest return words:
\begin{itemize}
        \item[$\mathcal{A}$)]\label{RW:A} The shortest return words to $w_A^{(0)}=1$ are $12$ and $10$.
        The shortest return word to $w_A^{(n)}$, $n\geq 1$, has the same Parikh vector as the factor $\varphi^{2n-1}(012)$.
       
        \item[$\mathcal{B}$)]\label{RW:B} The shortest return word to $w_B^{(n)}$, $n\geq 0$, has the same Parikh vector as the factor $\varphi^{2n}(012)$.

        \item[$\mathcal{C}$)]\label{RW:C}
            The shortest return word to $w_C^{(n)}$, $n\geq 0$, has the same Parikh vector as the factor $\varphi^{2n}(01)$.
           
        \item[$\mathcal{D}$)]\label{RW:D} 
        The shortest return word to $w_D^{(n)}$, $n \geq 0$, has the same Parikh vector as the factor $\varphi^{2n+1}(01)$.
        \end{itemize}

\subsection{Bispecial factors in 
\texorpdfstring{$\psi(\bf{p})$}{psi(p)}}
Let us start with some simple observations. 
If a factor $v \in \LL(\psi({\bf p}))$ contains the factor 11001 or 1101, then we are able to write 
$v = x\psi(w)y$ uniquely, where $w \in \LL({\bf p})$, and $x$ (resp., $y$) is a proper suffix (resp., proper prefix) of the image of some letter. 
Moreover, if $v$ is a bispecial factor in $\psi({\bf p})$, then $w$ is a bispecial factor in $\bf p$.

\begin{observation}
    Let  $v \in \LL(\psi({\bf p}))$ be a bispecial factor in $\psi({\bf p})$ such that it contains $11001$ or $1101$.
    Then one of the following items is true.
    \begin{enumerate}
        \item\label{theorem:BSApsi} There exists $w\in \LL({\bf p})$ such that $v = 01\psi(w)0110$ and $0w,2w, w0,w2 \in \LL({\bf p})$.
        \item\label{theorem:BSBpsi} There exists $w\in \LL({\bf p})$ such that $v = \psi(w)0$ and $0w,1w, w1,w2 \in \LL({\bf p})$.
        \item\label{theorem:BSCpsi} There exists $w\in \LL({\bf p})$ such that $v = 01\psi(w)0$ and $0w,2w, w1,w2 \in \LL({\bf p})$.  
        \item\label{theorem:BSDpsi} There exists $w\in \LL({\bf p})$ such that $v = \psi(w)0110$ and $0w,1w, w0,w2 \in \LL({\bf p})$.
    \end{enumerate}    
\end{observation}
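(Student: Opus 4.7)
The plan is to combine the unique factorization $v = x\psi(w)y$ from the paragraph preceding the observation (which also guarantees that $w$ is bispecial in $\bf p$) with the classification of bispecial factors of $\bf p$ given in Corollary~\ref{coro:BS}, and to check that the four cases of the observation correspond bijectively to the four types A, B, C, D of Corollary~\ref{coro:BS}.

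For each type of bispecial $w$, I would determine the minimal $x$ and $y$ that produce two distinct left (resp.\ right) extensions of $v = x\psi(w)y$ in $\psi({\bf p})$. The combinatorics reduces to comparing the images of the letters in the left-extension (resp.\ right-extension) set of $w$. The key data are that $\psi(0) = 011001$ and $\psi(2) = 01101$ share $\{\varepsilon, 1, 01\}$ as proper suffixes and $\{\varepsilon, 0, 01, 011, 0110\}$ as proper prefixes, while $\psi(1) = 0$ contributes only $\varepsilon$.

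For type A ($0w, 2w, w0, w2 \in \LL({\bf p})$): since both $\psi(0)$ and $\psi(2)$ end in $1$, the choice $x = \varepsilon$ fails to distinguish the two left contexts; the common suffix first diverges at the third-from-last position ($0$ in $\psi(0)$ versus $1$ in $\psi(2)$), forcing $x = 01$, and symmetrically $y = 0110$. For type B ($0w, 1w, w1, w2 \in \LL$): since $\psi(0)$ ends in $1$ and $\psi(1)$ ends in $0$, already $x = \varepsilon$ gives left-specialness; on the right, $\psi(1) = 0$ and $\psi(2) = 01101$ both start with $0$, so one extra $0$ is needed to split the contexts ($0$ coming from the start of the next image in the $w1$ branch versus $1$ at position 2 of $\psi(2)$ in the $w2$ branch), giving $y = 0$. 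Types C and D are obtained by pairing the two analyses across the two sides, producing $v = 01\psi(w)0$ and $v = \psi(w)0110$.

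The main obstacle I expect is the interpretation of $y = 0$ in cases (2) and (3): because $\psi(1)$ is the single letter $0$, the trailing $0$ in $v = \psi(w)0$ could in principle be absorbed into an extended $\psi(w')$ with $w' = w1$, which superficially conflicts with the uniqueness of the factorization. One must argue that the synchronization-based parsing records this $0$ as a proper prefix of $\psi(0)$ or $\psi(2)$ rather than as the whole of $\psi(1)$, and verify that this accounting still realizes both claimed right extensions of $v$ in $\psi({\bf p})$. Once this point is settled, the remainder is a routine enumeration matching the four types of Corollary~\ref{coro:BS} to cases (1)--(4).
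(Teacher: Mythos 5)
Your proposal is correct and follows exactly the route the paper intends (the paper states this as an Observation without written proof, relying on the preceding paragraph): unique synchronized factorization $v = x\psi(w)y$, the extension-set classification of Corollary~\ref{coro:BS}, and then reading off $x$ and $y$ as the longest common proper suffix/prefix of the images of the relevant extension letters, which your computations ($01$ and $0110$ for $\{0,2\}$, $\varepsilon$ and $0$ for $\{0,1\}$ and $\{1,2\}$) get right. The subtlety you flag about the trailing $0$ in cases (2)--(3) is real but resolves as you suggest, since in the occurrence followed by $2$ that $0$ is the first letter of $\psi(2)$ while in the occurrence followed by $1$ it is $\psi(1)$ itself, which is precisely what makes $v=\psi(w)0$ right special.
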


\begin{corollary}
Let $v \in {\mathcal L}(\psi({\bf p}))$. If $v$ is a bispecial factor containing $11001$ or $1101$, then $v$ has one of the following forms:
    \begin{itemize}
        \item[$\mathcal{A}$)]\label{BS:Apsi} $$v_A^{(n)} = 01\psi( 1\varphi^2(1)\varphi^4(1)\cdots \varphi^{2n}(1)\varphi^{2n-1}(0)\varphi^{2n-3}(0)\cdots \varphi(0))0110 \quad \quad \text{for $n \geq 1$}.$$  
        The Parikh vector of $v_A^{(n)}$ is the same as that of the factor         
    $$000111\psi(1\varphi(012)\varphi^3(012)\cdots \varphi^{2n-1}(012)).$$
        
        \item[$\mathcal{B}$)]\label{BS:Bpsi} $$v_B^{(n)} = \psi(\varphi(1)\varphi^3(1)\cdots \varphi^{2n+1}(1)\varphi^{2n}(0)\varphi^{2n-2}(0)\cdots \varphi^2(0)0) 0 \quad \quad \text{for $n \geq 0$.}$$  
        The Parikh vector of $v_B^{(n)}$ is the same as that of the factor         
    $$0\psi(012\varphi^2(012)\varphi^4(012)\cdots \varphi^{2n}(012)).$$

        \item[$\mathcal{C}$)]\label{BS:Cpsi} $$v_C^{(n)} = 01\psi(1\varphi^2(1)\varphi^4(1)\cdots \varphi^{2n}(1)\varphi^{2n}(0)\varphi^{2n-2}(0)\cdots \varphi^2(0)0)0 \quad \quad \text{for $n \geq 0$.}$$ 
        The Parikh vector of $v_C^{(n)}$ is the same as of the factor         
        $001\psi(01\varphi^2(01)\varphi^4(01)\cdots \varphi^{2n}(01))$.

        \item[$\mathcal{D}$)]\label{BS:Dpsi} $$v_D^{(n)} = \psi(\varphi(1)\varphi^3(1)\cdots \varphi^{2n+1}(1)\varphi^{2n+1}(0)\varphi^{2n-1}(0)\cdots \varphi(0))0110 \quad \quad \text{for $n \geq 0$.}$$     
        The Parikh vector of $v_D^{(n)}$ is the same as of the factor         
        $0011\psi(\varphi(01)\varphi^3(01)\cdots \varphi^{2n+1}(01))$.
    \end{itemize}
\end{corollary}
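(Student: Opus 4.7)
The plan is to obtain the corollary by directly substituting Proposition~\ref{prop:BS} into the preceding Observation and then reading off the Parikh vectors. No new combinatorics is needed beyond what was already done for $\mathbf{p}$.

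First, I would align the four subcases of the Observation with the four families of Proposition~\ref{prop:BS}. Subcase~\ref{theorem:BSApsi} says that $w$ is a bispecial factor of $\mathbf{p}$ with left extensions $\{0,2\}$ and right extensions $\{0,2\}$, which is exactly family $\mathcal{A}$ of Proposition~\ref{prop:BS}, forcing $w = w_A^{(n)}$ for some $n$; subcases \ref{theorem:BSBpsi}, \ref{theorem:BSCpsi}, \ref{theorem:BSDpsi} similarly force $w = w_B^{(n)}$, $w_C^{(n)}$, $w_D^{(n)}$ respectively. Substituting each $w_X^{(n)}$ into the corresponding boundary expression for $v$ supplied by the Observation (namely $01\psi(w)0110$, $\psi(w)0$, $01\psi(w)0$, and $\psi(w)0110$) yields exactly the four displayed formulas for $v_A^{(n)}$, $v_B^{(n)}$, $v_C^{(n)}$, $v_D^{(n)}$.

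For the Parikh-vector claims I would use that $\psi$ is a morphism, so the Parikh vector of $\psi(u)$ depends only on the Parikh vector of $u$. Proposition~\ref{prop:BS} records the Parikh vector of each $w_X^{(n)}$, and this transfers through $\psi$ to $\psi(w_X^{(n)})$. To obtain the Parikh vector of $v_X^{(n)}$, it then suffices to add the contribution of the letters tacked on outside $\psi(w_X^{(n)})$. In case $\mathcal{A}$ the outer letters $01$ and $0110$ together contain three $0$s and three $1$s, matching the Parikh vector of the prefix $000111$; in case $\mathcal{B}$ only the single letter $0$ is appended; in case $\mathcal{C}$ the letters $01$ and $0$ contribute the Parikh vector of $001$; and in case $\mathcal{D}$ the letters $0110$ contribute the Parikh vector of $0011$. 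Each of these matches the formula in the corresponding item.

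The only point of care is the lower range of $n$: $v_A^{(n)}$ is stated only for $n\geq 1$ because $w_A^{(0)}=1$ gives $01\psi(1)0110 = 0100110$, which contains neither $11001$ nor $1101$ and so is excluded by hypothesis, whereas in the other three families the base case $n=0$ does produce a bispecial factor containing one of these patterns. There is no real obstacle beyond this bookkeeping, since all the structural work has already been carried out in Proposition~\ref{prop:BS} and in the Observation.
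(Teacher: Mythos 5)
Your proposal is correct and matches the paper's (implicit) argument exactly: the corollary is obtained by matching the four extension-type subcases of the preceding Observation with the four families of Proposition~\ref{prop:BS}, substituting, and accounting for the boundary letters in the Parikh vectors. Your remark on why case $\mathcal{A}$ starts at $n\geq 1$ (since $01\psi(1)0110=0100110$ contains neither $11001$ nor $1101$) is a correct piece of bookkeeping that the paper leaves unstated.
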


\subsection{The shortest return words in \texorpdfstring{$\psi({\bf p})$}{psi(p)}}
Knowing the Parikh vectors of the shortest return words to bispecial factors in $\bf p$ and using the unambiguous reconstruction of $w$ from $\psi(w)$ when $\psi(w)$ contains $11001$ or $1101$, we obtain the following observation for the shortest return words:
    
\begin{itemize}
        \item[$\mathcal{A}$)]\label{RW:Apsi} 
        The shortest return word ${r}^{(n)}_A$ to $v_A^{(n)}$, $n\geq 1$, has the same Parikh vector as the factor $\psi(\varphi^{2n-1}(012))$.
        
        \item[$\mathcal{B}$)]\label{RW:Bpsi}         
        The shortest return word ${r}^{(n)}_B$ to $v_B^{(n)}$, $n\geq 0$, has the same Parikh vector as the factor $\psi(\varphi^{2n}(012))$.

        \item[$\mathcal{C}$)]\label{RW:Cpsi}
        The shortest return word ${r}^{(n)}_C$ to $v_C^{(n)}$, $n\geq 0$, has the same Parikh vector as the factor $\psi(\varphi^{2n}(01))$.

        \item[$\mathcal{D}$)]\label{RW:Dpsi}
       The shortest return word ${r}^{(n)}_D$ to $v_D^{(n)}$, $n\geq 0$, has the same Parikh vector as the factor $\psi(\varphi^{2n+1}(01))$.
    \end{itemize}

\subsection{Critical exponent of 
\texorpdfstring{$\psi({\bf p})$}{psi(p)}}
Having determined the lengths of bispecial factors and of their shortest return words in $\psi({\bf p})$, we can use Theorem~\ref{Asy:computation} to compute the critical exponent of $\psi({\bf p})$:

\begin{align*}
\cexp(\psi({\bf p})) &= 1 + \max\left\{A,B,C,D,E\right\}\\
A &= \sup\left\{\frac{|v_A^{(n)}|}{|{r}_A^{(n)}|} : n\geq 1 \right\} 
= \sup\left\{\frac{|000111\psi(1\varphi(012)\varphi^3(012)\cdots \varphi^{2n-1}(012))|}{|\psi(\varphi^{2n-1}(012))|} : n\geq 1 \right\}\,; \\
B &= \sup\left\{\frac{|v_B^{(n)}|}{|{r}_B^{(n)}|} : n\geq 0 \right\} 
= \sup\left\{\frac{|0\psi(012\varphi^2(012)\varphi^4(012)\cdots \varphi^{2n}(012))|}{|\psi(\varphi^{2n}(012))|} : n\geq 0 \right\}\,; \\
C &= \sup\left\{\frac{|v_C^{(n)}|}{|{r}_C^{(n)}|} : n\geq 0\right\} 
= \sup\left\{\frac{|001\psi(01\varphi^2(01)\varphi^4(01)\cdots \varphi^{2n}(01))|}{|\psi(\varphi^{2n}(01))|} : n\geq 0 \right\}\,; \\
D &= \sup\left\{\frac{|v_D^{(n)}|}{|{r}_D^{(n)}|} : n\geq 0 \right\}
= \sup\left\{\frac{|0011\psi(\varphi(01)\varphi^3(01)\cdots \varphi^{2n+1}(01))|}{|\psi(\varphi^{2n+1}(01))|} : n\geq 0 \right\}\,; \\
E &= \left\{\frac{|w|}{|r|} : w \text{\ bispecial not containing 11001 or 1101, } r \text{ the shortest return word to } w \right\}\,.
\end{align*}

\subsubsection{Computation of \texorpdfstring{$E$}{E}}
\begin{table}[htb]
    \centering
        \setlength{\tabcolsep}{3pt}
\renewcommand{\arraystretch}{1.2}

\begin{tabular}{|c|c|c|c|c|c|c|c|c|}
    \hline
     $w$ & 0 & 1 & 01 & 10 & 101 & 010 & 0110 & 1001 \\ 
 \hline
     $r$ & 0 & 1 & 01 & 10 & 101 & 01011 & 011001 & 100 \\
 \hline      
\end{tabular}\label{table:shortBS}
\caption{Short bispecial factors $w$ and their return words $r$}
\end{table}

As summarized in Table~\ref{table:shortBS}, for each bispecial $w$ that does not contain $11001$ and $1101$ and its shortest return word~$r$, we have
$\frac{|w|}{|r|}\leq \frac{4}{3} \doteq 1.333$, therefore $E = 1+\frac{1}{3}$.

\subsubsection{Computation of \texorpdfstring{$A$ and $B$}{A and B}}
If we consider $a_n := |\psi(\varphi^n(012))|$, then we can see that $a_n$ satisfies the recurrence relation 
$a_{n+1} = 2a_n - a_{n-1} + a_{n-2}$
with initial conditions $a_0 = 12, a_1 = 19$, and $a_2 = 32$.

This recurrence relation may be solved and we obtain
$$a_n = A_1\beta_1^n + B_1 \lambda_1^n + C_1 \lambda_2^n,$$
where
$$\beta_1 \doteq 1.75488,\quad \lambda_1 \doteq 0.12256 + 0.74486 i, \quad \lambda_2 = \overline{\lambda_1}$$
are the roots of the polynomial $t^3 - 2t^2 + t - 1$, and
\begin{align*}
    A_1 &= \frac{12|\lambda_1|^2 - 38\operatorname{Re}(\lambda_1) + 32}{|\beta_1-\lambda_1|^2} \doteq 10.6175\,;\\
    B_1 &= \frac{12\beta_1\lambda_2 - 19 (\beta_1+\lambda_2)+32}{(\beta_1 - \lambda_1)(\lambda_2-\lambda_1)} \doteq 0.6912	-0.1330 i\,;\\
    C_1 &= \overline{B_1}\,.
\end{align*}

Let us first determine
$$A' := \limsup_{n\to+\infty} \frac{|v_A^{(n)}|}{|{r}_A^{(n)}|} = \limsup_{n\to+\infty} \frac{|000111\psi(1\varphi(012)\varphi^3(012)\cdots \varphi^{2n-1}(012))|}{|\psi(\varphi^{2n-1}(012))|}\,.$$
We have
\begin{align*}
    A' &= \limsup_{n\to+\infty} \frac{6 + 1 + \sum_{k=1}^n a_{2k-1}}{a_{2n-1}} \\
    &= \limsup_{n\to+\infty} \frac{7 + A_1\sum_{k=1}^n\beta_1^{2k-1} + B_1 \sum_{k=1}^n \lambda_1^{2k-1} + C_1 \sum_{k=1}^n\lambda_2^{2k-1} }{A_1\beta_1^{2n-1} + B_1 \lambda_1^{2n-1} + C_1 \lambda_2^{2n-1}}\\
    &= \lim_{n\to+\infty} \frac{7 + A_1\beta_1\frac{\beta_1^{2n}-1}{\beta_1^2-1} + B_1\lambda_1\frac{\lambda_1^{2n}-1}{\lambda_1^2-1} + C_1 \lambda_2\frac{\lambda_2^{2n}-1}{\lambda_2^2-1}}{ A_1\beta_1^{2n-1} + B_1 \lambda_1^{2n-1} + C_1 \lambda_2^{2n-1}}= \frac{\beta_1^2}{\beta_1^2-1}\doteq 2.4809 \,.\\
    \end{align*}

Next we will show that $A \leq A'$ and thus $A = A'$. 
We want to show for all $n\geq 1$ that
\begin{align*}
    \frac{7 + A_1\sum_{k=1}^n\beta_1^{2k-1} + B_1 \sum_{k=1}^n \lambda_1^{2k-1} + C_1 \sum_{k=1}^n\lambda_2^{2k-1} }{A_1\beta_1^{2n-1} + B_1 \lambda_1^{2n-1} + C_1 \lambda_2^{2n-1}} 
    \quad &\leq^? \quad  \frac{\beta_1^2}{\beta_1^2 - 1}\\
    (\beta_1^2-1)\left(7+ 2\operatorname{Re}\left(B_1\lambda_1\frac{\lambda_1^{2n}-1}{\lambda_1^2 - 1}\right)\right) + A_1\beta_1^{2n+1} - A_1\beta_1
    \quad &\leq^? \quad 2\beta_1^2 \operatorname{Re}(B_1\lambda_1^{2n-1}) + A_1\beta_1^{2n+1}\\
    (\beta_1^2-1) \left(7 + 2\operatorname{Re}\left(B_1\lambda_1\frac{\lambda_1^{2n}-1}{\lambda_1^2 - 1}\right)\right)
    \quad &\leq^? \quad 2\beta_1^2 \operatorname{Re}(B_1\lambda_1^{2n-1}) + A_1\beta_1\,.\\
\end{align*}

Now, on the one hand, for $n \geq 2$, we have
\begin{align*}
    (\beta_1^2-1) \left(7 + 2\operatorname{Re}\left(B_1\lambda_1\frac{\lambda_1^{2n}-1}{\lambda_1^2 - 1}\right)\right)
    \quad     
    &\leq (\beta_1^2-1) \left(7 + 2|B_1||\lambda_1|\frac{|\lambda_1|^{2n}+1}{|\lambda_1^2 - 1|}\right)\\
    &\leq (\beta_1^2-1)\left(7 + 2 |B_1| |\lambda_1|\frac{|\lambda_1|^4+1}{|\lambda_1^2 - 1|}\right)\,.
    \end{align*}
On the other hand,     \begin{align*}
    2\beta_1^2 \operatorname{Re}(B_1\lambda_1^{2n-1}) + A_1\beta_1
    \quad
    &\geq A_1\beta_1 - 2\beta_1^2|B_1||\lambda_1|^{2n-1} \\
    &\geq A_1\beta_1 - 2\beta_1^2|B_1||\lambda_1|^{3}.
    \intertext{And if we substitute the values, we get}
    (\beta_1^2-1)\left(7 + 2 |B_1| |\lambda_1|\frac{|\lambda_1|^4+1}{|\lambda_1^2 - 1|}\right)
    &\leq A_1\beta_1 - 2\beta_1^2|B_1||\lambda_1|^{3}.
    \end{align*}
For $n = 1$, we get $\frac{7 + a_1}{a_1} = 1 + \frac{7}{19}< A'$.

Therefore $$A = A' = \frac{\beta_1^2}{\beta_1^2-1}.$$

Next, we will use the same procedure for $B$.
\begin{align*}
B' &:= \limsup_{n\to+\infty} \frac{|v_B^{(n)}|}{|{r}_B^{(n)}|} \\
&= \limsup_{n\to+\infty} \frac{|0\psi(012\varphi^2(012)\varphi^4(012)\cdots \varphi^{2n}(012))|}{|\psi(\varphi^{2n}(012))|}\\
    &= \limsup_{n\to+\infty} \frac{1 + \sum_{k=0}^n a_{2k}}{a_{2n}} \\
    &= \limsup_{n\to+\infty} \frac{1+ A_1\sum_{k=0}^n\beta_1^{2k} + B_1 \sum_{k=0}^n \lambda_1^{2k} + C_1 \sum_{k=0}^n\lambda_2^{2k} }{A_1\beta_1^{2n} + B_1 \lambda_1^{2n} + C_1 \lambda_2^{2n}}\\
    & = \frac{\beta_1^2}{\beta_1^2-1}\,.
\end{align*}

Now we will show that $B = B'$, because for all $n \geq 0$ we have
\begin{align*}
    \frac{1+ A_1\sum_{k=0}^n\beta_1^{2k} + B_1 \sum_{k=0}^n \lambda_1^{2k} + C_1 \sum_{k=0}^n\lambda_2^{2k} }{A_1\beta_1^{2n} + B_1 \lambda_1^{2n} + C_1 \lambda_2^{2n}} 
    \quad &\leq^? \quad  \frac{\beta_1^2}{\beta_1^2 - 1}\\
    (\beta_1^2-1)\left(1 + 2\operatorname{Re}\left(B_1\frac{\lambda_1^{2n+2}-1}{\lambda_1^2 - 1}\right)\right) + A_1\beta_1^{2n+2} - A_1 
    \quad &\leq^? \quad 2\beta_1^2 \operatorname{Re}(B_1\lambda_1^{2n}) + A_1\beta_1^{2n+2}\\
    (\beta_1^2-1) \left(1 + 2\operatorname{Re}\left(B_1\frac{\lambda_1^{2n+2}-1}{\lambda_1^2 - 1}\right)\right)
    \quad &\leq^? \quad 2\beta_1^2 \operatorname{Re}(B_1\lambda_1^{2n}) + A_1\,.\\
    \end{align*}

    Now, on the one hand, for all $n\geq 2$, we have
    \begin{align*}
    (\beta_1^2-1) \left(1 + 2\operatorname{Re}\left(B_1\frac{\lambda_1^{2n+2}-1}{\lambda_1^2 - 1}\right)\right )
    &\leq (\beta_1^2-1) \left(1+2\operatorname{Re}\left(\frac{B_1}{1-\lambda_1^2}\right) - 2\operatorname{Re}\left(B_1\frac{\lambda_1^{2n+2}}{1-\lambda_1^2}\right)\right)\\
    &\leq (\beta_1^2-1)\left(1+ 2\operatorname{Re}\left(\frac{B_1}{1-\lambda_1^2}\right) + 2|B_1|\frac{|\lambda_1^{2n+2}|}{|1-\lambda_1^2|} \right)\\
    &\leq (\beta_1^2-1)\left(1 + 2\operatorname{Re}\left(\frac{B_1}{1-\lambda_1^2}\right) + 2|B_1|\frac{|\lambda_1|^{6}}{|1-\lambda_1^2|} \right)\,.
    \end{align*}
   On the other hand, $$
    2\beta_1^2 \operatorname{Re}(B_1\lambda_1^{2n}) + A_1 \geq A_1-2\beta_1^2|B_1||\lambda_1|^4.
    $$
    Substituting the values proves that
    $$
     (\beta_1^2-1)\left(1 + 2\operatorname{Re}\left(\frac{B_1}{1-\lambda_1^2}\right) + 2|B_1|\frac{|\lambda_1|^{6}}{|1-\lambda_1^2|} \right) < A_1-2\beta_1^2|B_1||\lambda_1|^4.
    $$
    For $n = 0$, we get $\frac{1 + a_0}{a_0} = \frac{1+12}{12} < B'$.
    For $n = 1$, we get $\frac{1 + a_0 + a_2}{a_2} = 1+\frac{1+12}{32} < B'$.
The inequality $B \leq B'$ is proven, and therefore $B = B' = \frac{\beta_1^2}{\beta_1^2 - 1}$.

\subsubsection{Computation of 
\texorpdfstring{$C$ and $D$}{C and D}}
If we consider $c_n := |\psi(\varphi^n(01))|$, then we can see that $c_n$ satisfies the same recurrence relation as $a_n$, i.e., $c_{n+1} = 2c_n - c_{n-1} + c_{n-2}$
with initial conditions $c_0 = 7, c_1 = 13$, and $c_2 = 25$.

This recurrence relation can be solved and we obtain
$$c_n = A_2\beta_1^n + B_2 \lambda_1^n + C_2 \lambda_2^n,$$
where
$$\beta_1 \doteq 1.75488,\quad \lambda_1 \doteq 0.12256 + 0.74486 i, \quad \lambda_2 = \overline{\lambda_1}$$
are the roots of the polynomial $t^3 - 2t^2 + t - 1$, and
\begin{align*}
    A_2 &= \frac{7|\lambda_1|^2 - 13\operatorname{Re}(\lambda_1) + 25}{|\beta_1-\lambda_1|^2} \doteq 8.0149\,;\\
    B_2 &= \frac{7\beta_1\lambda_2 - 13 (\beta_1+\lambda_2)+25}{(\beta_1 - \lambda_1)(\lambda_2-\lambda_1)} \doteq -0.5075 + 0.6315 i\,;\\
    C_2 &= \overline{B_2}.
\end{align*}

The proof of $C'=D'=\frac{\beta_1^2}{\beta_1^2 - 1}$  proceeds in a similar fashion as that for $B'$ (resp., $A'$).
To complete the proof, we need to show firstly $C \leq C'$, i.e., for all $n \geq 0$ that
\begin{align*}
    \frac{3+ A_2\sum_{k=0}^n\beta_1^{2k} + B_2 \sum_{k=0}^n \lambda_1^{2k} + C_2 \sum_{k=0}^n\lambda_2^{2k} }{A_2\beta_1^{2n} + B_2 \lambda_1^{2n} + C_2 \lambda_2^{2n}} 
    \quad &\leq^? \quad  \frac{\beta_1^2}{\beta_1^2 - 1}\\
    (\beta_1^2-1) \left(3 + 2\operatorname{Re}\left(B_2\frac{\lambda_1^{2n+2}-1}{\lambda_1^2 - 1}\right)\right)
    \quad &\leq^? \quad 2\beta_1^2 \operatorname{Re}(B_2\lambda_1^{2n}) + A_2\,.\\
\end{align*}
    Now, we have for all $n\geq 2$
    \begin{align*}
    (\beta_1^2-1) \left(3 + 2\operatorname{Re}\left(B_2\frac{\lambda_1^{2n+2}-1}{\lambda_1^2 - 1}\right)\right )
    &\leq (\beta_1^2-1)\left(3 + 2\operatorname{Re}\left(\frac{B_2}{1-\lambda_1^2}\right) + 2|B_2|\frac{|\lambda_1|^{6}}{|1-\lambda_1^2|} \right)\,;\\
    2\beta_1^2 \operatorname{Re}(B_2\lambda_1^{2n}) + A_2 &\geq A_2-2\beta_1^2|B_2||\lambda_1|^4.\\
    \intertext{Substituting the values proves that}
     (\beta_1^2-1)\left(3 + 2\operatorname{Re}\left(\frac{B_2}{1-\lambda_1^2}\right) + 2|B_2|\frac{|\lambda_1|^{6}}{|1-\lambda_1^2|} \right) &< A_2-2\beta_1^2|B_2||\lambda_1|^4.
    \end{align*}
    For $n = 0$, we get $\frac{3 + c_0}{c_0} = 1 + \frac{3}{7}< C'$.
    For $n = 1$, we get $\frac{3 + c_0 + c_2}{c_2} = 1 + \frac{3+7}{25} < C'$.
The inequality $C \leq C'$ is proven.   Hence $C = C' = \frac{\beta_1^2}{\beta_1^2 - 1}$.

Finally, we need to prove $D \leq D'$, i.e., for all $n \geq 1$, that
\begin{align*}
    \frac{4 + A_2\sum_{k=1}^n\beta_1^{2k-1} + B_2 \sum_{k=1}^n \lambda_1^{2k-1} + C_2 \sum_{k=1}^n\lambda_2^{2k-1} }{A_2\beta_1^{2n-1} + B_2 \lambda_1^{2n-1} + C_2 \lambda_2^{2n-1}} 
    \quad &\leq^? \quad  \frac{\beta_1^2}{\beta_1^2 - 1}\\
    (\beta_1^2-1) \left(4 + 2\operatorname{Re}\left(B_2\lambda_1\frac{\lambda_1^{2n}-1}{\lambda_1^2 - 1}\right)\right)
    \quad &\leq^? \quad 2\beta_1^2 \operatorname{Re}(B_2\lambda_1^{2n-1}) + A_2\beta_1\,.
\end{align*}

Using the same estimates as for $A$ and $n \geq 2$, it remains to check the following inequality
    \begin{align*}
    (\beta_1^2-1)\left(4 + 2 |B_2| |\lambda_1|\frac{|\lambda_1|^4+1}{|\lambda_1^2 - 1|}\right)
    &\leq A_2\beta_1 - 2\beta_1^2|B_2||\lambda_1|^{3}.
    \end{align*}
    The inequality holds for given values. 
    For $n = 1$, we get $\frac{4 + c_1}{c_1} = 1 + \frac{4}{13} < D'$. Consequently, $D = D'$.

We have shown that $A = B = C = D = \frac{\beta_1^2}{\beta_1^2-1}$ and $E = 1 + \frac{1}{3} < A$. In other words, we have proved the following theorem.
\begin{theorem}
    The critical exponent of $\psi({\bf p})$ equals 
    \begin{equation*}
        \cexp(\psi({\bf p})) = 1 + \frac{\beta_1^2}{\beta_1^2-1} = 2+ \frac{1}{\beta_1^2-1} \doteq 2.4808627161472369.        
    \end{equation*}
\end{theorem}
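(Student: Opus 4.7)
The plan is to invoke Theorem~\ref{Asy:computation}: since $\psi({\bf p})$ is aperiodic and uniformly recurrent (both properties inherited from $\bf p$ via the non-erasing morphism $\psi$), it suffices to compute $\sup_n |w_n|/|r_n|$, where $w_n$ ranges over bispecial factors of $\psi({\bf p})$ and $r_n$ is a shortest return word to $w_n$. I would partition these bispecials into a finite list of \emph{short} ones (not containing $11001$ or $1101$) and the four infinite families $v_A^{(n)}, v_B^{(n)}, v_C^{(n)}, v_D^{(n)}$ previously classified via the bispecial Corollary. The short list is handled by direct enumeration in a small table, verifying $|w|/|r| \leq 4/3$, so the contribution $E$ from this portion satisfies $E \leq 4/3$.

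For the four infinite families, I would reduce the problem to a single linear recurrence. Set $a_n = |\psi(\varphi^n(012))|$ and $c_n = |\psi(\varphi^n(01))|$. Because the characteristic polynomial $t^3 - 2t^2 + t - 1$ of the incidence matrix of $\varphi$ has one real root $\beta_1 > 1$ and a pair of complex conjugate roots $\lambda_1, \overline{\lambda_1}$ of modulus less than $1$, both sequences satisfy $x_{n+1} = 2x_n - x_{n-1} + x_{n-2}$ and admit closed forms $a_n = A_1\beta_1^n + 2\operatorname{Re}(B_1\lambda_1^n)$, $c_n = A_2\beta_1^n + 2\operatorname{Re}(B_2\lambda_1^n)$, with the constants determined by the initial conditions. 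Each bispecial length $|v_X^{(n)}|$ is a small prefix length plus a partial sum over $a_{2k-1}$'s, $a_{2k}$'s, $c_{2k}$'s, or $c_{2k-1}$'s, while the corresponding $|r^{(n)}_X|$ is just the top term. Summing the dominant geometric series of ratio $\beta_1^2$ yields $\lim_n |w_n|/|r_n| = \beta_1^2/(\beta_1^2-1)$ uniformly across the four families.

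The main obstacle is upgrading $\limsup$ to $\sup$, since a priori the supremum could be attained at a small index rather than in the limit. To handle this I would write the desired bound $|v_X^{(n)}|/|r^{(n)}_X| \leq \beta_1^2/(\beta_1^2-1)$ in cleared form and estimate the oscillating terms via $|\operatorname{Re}(B_i\lambda_1^{2k})| \leq |B_i||\lambda_1|^{2k}$, exploiting $|\lambda_1| < 1$. For $n \geq 2$ this reduces to a concrete numerical inequality of the shape $(\beta_1^2-1)\bigl(c + 2|B_i||\lambda_1|(|\lambda_1|^4+1)/|\lambda_1^2-1|\bigr) \leq A_i\beta_1 - 2\beta_1^2|B_i||\lambda_1|^3$, verified by plugging in the approximate numerical values of $\beta_1, \lambda_1, A_i, B_i$. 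The base cases $n = 0, 1$ are checked by direct substitution into the closed forms, and in each case the ratio comes out strictly below $\beta_1^2/(\beta_1^2-1)$. Since $\beta_1^2/(\beta_1^2-1) > 4/3$, the long families dominate the short ones, and Theorem~\ref{Asy:computation} yields $\cexp(\psi({\bf p})) = 1 + \beta_1^2/(\beta_1^2-1) = 2 + 1/(\beta_1^2-1)$, as claimed.
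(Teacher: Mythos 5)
Your proposal follows essentially the same route as the paper's own proof: it applies Theorem~\ref{Asy:computation}, splits the bispecial factors into the finitely many short ones (ratio at most $4/3$) and the four families $v_A^{(n)},v_B^{(n)},v_C^{(n)},v_D^{(n)}$, solves the recurrence $x_{n+1}=2x_n-x_{n-1}+x_{n-2}$ with dominant root $\beta_1$, and upgrades the $\limsup$ of $\beta_1^2/(\beta_1^2-1)$ to a $\sup$ via the same modulus estimates on the subdominant complex roots together with direct checks of the small indices. The argument is correct and matches the paper in structure and in all essential details.
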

Let us emphasize that the critical exponents of ${\bf p}$ and $\psi(\bf{p})$ are the same~\cite{Currie&Ochem&Rampersad&Shallit:2022}.
\subsection{Bispecial factors in \texorpdfstring{$\xi(\bf{p})$}{xi(p)}}
\label{subsect:xi_bispecial factors}
Now we turn to $\xi(\bf{p})$.
Let us start with some simple observations. 
If a factor $v \in \LL(\xi({\bf p}))$ contains the factor 00 or 1011 or 11010, then we are able to express
$v = x\xi(w)y$ uniquely, where $w \in \LL({\bf p})$ and $x$ (resp., $y$) is a proper suffix (resp., proper prefix) of an image of some letter. 
Moreover, if $v$ is a bispecial factor in $\xi({\bf p})$, then $w$ is a bispecial factor in $\bf p$.

\begin{observation}\label{obs:BSxi}
    Let  $v \in \LL(\xi({\bf p}))$ be a bispecial factor such that it contains 00 or 1011 or 11010.
    Assume $v \not =1\xi(1)=10110$ and $v\not =1\xi(10)=1011001$.
    Then there exists $w\in \LL({\bf p})$ such that either $v =\xi(w)$ and $w$ is a bispecial factor with left extensions $\{0,1\}$, or $v=01\xi(w)$ and $w$ is a bispecial factor with left extensions $\{0,2\}$.
    Moreover, the length of the shortest return word to $v$ in $\xi({\bf p})$ equals $|\xi(r)|$, where $r$ is the shortest return word to $w$ in $\bf p$.
\end{observation}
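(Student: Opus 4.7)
The argument proceeds in three steps: unique desubstitution, alignment from bispeciality, and transfer of return words.

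\emph{Step 1 (Synchronization).} I would first verify a synchronization lemma for $\xi$: every occurrence of one of the patterns $00$, $1011$, $11010$ in a factor of $\xi({\bf p})$ sits at a unique position in the block decomposition induced by $\xi$. This is a finite check: for each pair $(a,b)\in\{0,1,2\}^{2}$, enumerate the occurrences of those patterns inside $\xi(ab)$ and observe that the $\xi$-alignment is forced in every case. Consequently any $v$ containing one of the patterns admits a unique factorisation $v = x\,\xi(w_0)\,y$ with $w_0\in\LL({\bf p})$, $x$ a proper suffix of some $\xi(a)$, and $y$ a proper prefix of some $\xi(b)$. The excluded cases $v\neq 10110 = 1\xi(1)$ and $v\neq 1011001 = 1\xi(10)$ are exactly the short bispecials for which the associated $w_0$ would be degenerate and the case split below breaks down.

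\emph{Step 2 (Identifying $w$).} Next I would pin down $x$ and $y$ from the bispeciality of $v$. The last letters of $\xi(0),\xi(1),\xi(2)$ are $1,0,1$ and the first letters are $0,0,1$. Using the first-letter data together with Observation~\ref{obs::RSp}, which says a right-special factor of ${\bf p}$ has right extensions $\{1,2\}$ or $\{0,2\}$, one sees that $\xi(w_0)$ is automatically right-special whenever $w_0$ is right-special; hence $y=\varepsilon$. On the left, Observation~\ref{obs::LSp} says the left extensions of a left-special factor of ${\bf p}$ are either $\{0,1\}$ or $\{0,2\}$. In the first case the letters preceding $\xi(w_0)$ in $\xi({\bf p})$ are $\{0,1\}$, so $v = \xi(w_0)$ itself is left-special, and we take $w=w_0$. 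In the second case both left extensions $0$ and $2$ feed the same last letter $1$, so $v=\xi(w_0)$ fails to be left-special; one is then forced to take $x=01$, writing $w_0=0w$ and $v=01\,\xi(w)=\xi(0w)$ with $w$ bispecial and having left extensions $\{0,2\}$. Verifying that $v$ really is left-special in this second case requires showing that $0w$ has two distinct left extensions in ${\bf p}$, so that two different last letters of $\xi$-images appear before $v$; this rests on the $\varphi$-block structure of ${\bf p}$ and on the fact that ${\bf p}$ avoids $00$ (by \cite[Theorem~14]{Currie&Ochem&Rampersad&Shallit:2022}), from which $0w$ inherits left extensions $\{1,2\}$ and the two last letters $0=$ last of $\xi(1)$ and $1=$ last of $\xi(2)$ provide the required distinction.

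\emph{Step 3 (Return words).} Finally, I would transfer return words using the same synchronisation. If $r$ is the shortest return word to $w$ in ${\bf p}$, then $rw\in\LL({\bf p})$ contains $w$ exactly twice, so $\xi(rw)=\xi(r)\,\xi(w)$ contains $\xi(w)$ at both ends; prepending the common prefix $x\in\{\varepsilon,01\}$ and recalling $y=\varepsilon$ shows that $\xi(r)$ is a return word to $v$ of length $|\xi(r)|$. Conversely, any return word $R$ to $v$ in $\xi({\bf p})$ desubstitutes uniquely under $\xi$ to a return word to $w$ in ${\bf p}$, and minimality is preserved in both directions because no shorter alignment is compatible with the unique synchronisation. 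The main obstacle of the proof is the left-extension bookkeeping in Step~2: establishing that the $01$-realignment in the $\{0,2\}$-case yields left-speciality in $\xi({\bf p})$ is where the detailed structure of ${\bf p}$ under $\varphi$ genuinely enters the argument, whereas the other two steps are essentially standard consequences of unique desubstitution.
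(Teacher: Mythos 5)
The paper offers no written proof of this Observation (it is stated as a consequence of the preceding ``simple observations''), so there is nothing to compare line by line; judged on its own, your proposal has the right architecture (desubstitution, extension analysis, transfer of return words), but the key step is wrong. In Step~2 you justify the left-speciality of $v=01\xi(w)$ in the $\{0,2\}$ case by claiming that $0w$ ``inherits left extensions $\{1,2\}$,'' so that the distinct last letters of $\xi(1)$ and $\xi(2)$ separate the occurrences. This is false: ${\bf p}$ avoids the factor $20$ (the paper explicitly notes that $02\in\LL({\bf p})$ but $20\notin\LL({\bf p})$, and $20$ is in the avoided set of \cite[Theorem 14]{Currie&Ochem&Rampersad&Shallit:2022}), so the only left extension of $0w$ is $1$, and every occurrence of $v$ aligned as $\xi(0)\xi(w)$ is preceded by the same letter $0$, the last letter of $\xi(1)=0110$. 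Your mechanism therefore never produces a second preceding letter, and as written $01\xi(w)$ would not be left special at all.

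The actual reason $01\xi(w)$ is left special is that its two-letter prefix $01$ admits two different $\xi$-alignments across occurrences: besides $\xi(0)\xi(w)$ (preceded by the final $0$ of $\xi(1)$, via $10w$), the word $01\xi(w)$ also occurs as a suffix of $\xi(12w)=0110\cdot 1\cdot\xi(w)$, where its $0$ is the last letter of $\xi(1)$ and its $1$ is $\xi(2)$; this occurrence is preceded by the penultimate letter $1$ of $\xi(1)$. Since $w$ has left extensions $\{0,2\}$, both $10w$ and $12w$ occur, and the two alignments yield the two preceding letters $0$ and $1$. This also undercuts your Step~1, where you assert a unique alignment of all of $v$: the patterns $00$, $1011$, $11010$ synchronize the $\xi(w)$ core, but the short prefix can and does realign, and this realignment is precisely the source of left-speciality here (and of the exceptional bispecials $1\xi(1)$ and $1\xi(10)$, whose bispeciality likewise comes from realignment of a short prefix over a preimage letter that is not itself suitably special). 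Step~3's conclusion survives, but not by ``unique desubstitution'' of return words: one must instead note that in both alignments $v$ begins exactly two positions before $\xi(w)$, so consecutive occurrences of $v$ are separated by exactly $|\xi(r)|$ for $r$ a return word to $w$.
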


\subsection{Critical exponent of 
\texorpdfstring{$\xi({\bf p})$}{xi(p)}}
Using Observation~\ref{obs:BSxi}, the lengths of bispecial factors and their shortest return words in $\xi({\bf p})$ may be derived in an analogous way as in the case of $\psi({\bf p})$. Next, we can use Theorem~\ref{Asy:computation} to compute the critical exponent of $\xi({\bf p})$:

\begin{align*}
\cexp(\xi({\bf p})) &= 1 + \max\left\{A,B,C,D,E\right\}\\
A &= \sup\left\{\frac{|01\xi(1\varphi(012)\varphi^3(012)\cdots \varphi^{2n-1}(012))|}{|\xi(\varphi^{2n-1}(012))|} : n\geq 1 \right\}\,; \\
B &= \sup\left\{\frac{|\xi(012\varphi^2(012)\varphi^4(012)\cdots \varphi^{2n}(012))|}{|\xi(\varphi^{2n}(012))|} : n\geq 0 \right\}\,; \\
C &= \sup\left\{\frac{|01\xi(01\varphi^2(01)\varphi^4(01)\cdots \varphi^{2n}(01))|}{|\xi(\varphi^{2n}(01))|} : n\geq 0 \right\}\,; \\
D &= \sup\left\{\frac{|\xi(\varphi(01)\varphi^3(01)\cdots \varphi^{2n+1}(01))|}{|\xi(\varphi^{2n+1}(01))|} : n\geq 0 \right\}\,; \\
E &=\max \left\{\frac{|w|}{|r|} : w \text{\ short bispecial and}\ r \text{ the shortest return word to } w \right\}\,,
\end{align*}
where we say that $w$ is a {\it short bispecial factor\/} if $w$ does not contain $00$, $1011$, $11010$ or $w=1\xi(1)$ or $w=1\xi(10)$.

\subsubsection{Computation of \texorpdfstring{$E$}{E}}
\begin{table}[h]
    \centering
        \setlength{\tabcolsep}{3pt}
\renewcommand{\arraystretch}{1.2}

\begin{tabular}{|c|c|c|c|c|c|c|c|c|c|c|}
    \hline
     $w$ & 0 & 1 & 01 & 10 & 101 & 0110 & 01101 &$1\xi(1)$& $1\xi(10)$\\ 
 \hline
     $r$ & 0 & 1 & 01 & 10 & 10 & 011 & 011010110&$1\xi(1)$& $1\xi(10)1^{-1}$\\
 \hline      
\end{tabular}
\end{table}

Thus, we have $E= \frac{3}{2}$. 

\subsubsection{Computation of \texorpdfstring{$A, B, C, D$}{A,B,C,D}}
If we consider $x_n := |\xi(\varphi^n(012))|$, then we can see that $x_n$ satisfies the recurrence relation 
$x_{n+1} = 2x_n - x_{n-1} + x_{n-2}$
with initial conditions $x_0 = 7, x_1 = 13$, and $x_2 = 24$.

This recurrence relation may be solved and we obtain
$$x_n = X_1\beta_1^n + Y_1 \lambda_1^n + Z_1 \lambda_2^n,$$
where
$$\beta_1 \doteq 1.75488,\quad \lambda_1 \doteq 0.12256 + 0.74486 i, \quad \lambda_2 = \overline{\lambda_1}$$
are the roots of the polynomial $t^3 - 2t^2 + t - 1$, and
\begin{align*}
    X_1 &= \frac{7|\lambda_1|^2 - 26\operatorname{Re}(\lambda_1) + 24}{|\beta_1-\lambda_1|^2} \doteq 7.704\,;\\
    Y_1 &= \frac{7\beta_1\lambda_2 - 13 (\beta_1+\lambda_2)+24}{(\beta_1 - \lambda_1)(\lambda_2-\lambda_1)} \doteq -0.352 + 0.291 i\,;\\
    Z_1 &= \overline{Y_1}\,.
\end{align*}

We want to show for all $n\geq 1$ that
\begin{align*}
    \frac{6 + X_1\sum_{k=1}^n\beta_1^{2k-1} + Y_1 \sum_{k=1}^n \lambda_1^{2k-1} + Z_1 \sum_{k=1}^n\lambda_2^{2k-1} }{X_1\beta_1^{2n-1} + Y_1 \lambda_1^{2n-1} + Z_1 \lambda_2^{2n-1}} 
    \quad &\leq^? \quad \frac{\beta_1^2}{\beta_1^2-1}< \frac{3}{2}\,.\\
    \end{align*}
\begin{align*}
     (\beta_1^2-1)\left(6+ 2\operatorname{Re}\left(Y_1\lambda_1\frac{\lambda_1^{2n}-1}{\lambda_1^2 - 1}\right)\right) + X_1\beta_1^{2n+1} - X_1\beta_1
    \quad &\leq^? \quad 2\beta_1^2 \operatorname{Re}(Y_1\lambda_1^{2n-1}) + X_1\beta_1^{2n+1}\\
    (\beta_1^2-1) \left(6 + 2\operatorname{Re}\left(Y_1\lambda_1\frac{\lambda_1^{2n}-1}{\lambda_1^2 - 1}\right)\right)
    \quad &\leq^? \quad 2\beta_1^2 \operatorname{Re}(Y_1\lambda_1^{2n-1}) + X_1\beta_1\,.\\
\end{align*}
Now, on one hand,  for $n\geq 7$ we have
\begin{align*}
    (\beta_1^2-1) \left(6 + 2\operatorname{Re}\left(Y_1\lambda_1\frac{\lambda_1^{2n}-1}{\lambda_1^2 - 1}\right)\right)  
    &\leq (\beta_1^2-1)\left(6 + 2 |Y_1| |\lambda_1|\frac{|\lambda_1|^{14}+1}{|\lambda_1^2-1| }\right)\,.
    \end{align*}
On the other hand,  we have   
\begin{align*}
    2\beta_1^2 \operatorname{Re}(Y_1\lambda_1^{2n-1}) + X_1\beta_1
    &\geq X_1\beta_1 - 2\beta_1^2|Y_1||\lambda_1|^{13}.
    \intertext{And if we substitute the values, we get}
    (\beta_1^2-1)\left(6 + 2 |Y_1| |\lambda_1|\frac{|\lambda_1|^{14}+1}{|\lambda_1^2-1| }\right)
    &\leq X_1\beta_1 - 2\beta_1^2|Y_1||\lambda_1|^{13}\,.
    \end{align*}

    The first 6 values from the set $A$ are given in Table~\ref{tab:elementsofA}. 
    We can see that they are all smaller than \mbox{$\frac{3}{2}$}. The inequality therefore holds for all $n\geq 1$.
    \begin{table}[htb]
        \centering
        \begin{tabular}{c|cccccc}
            $n$ & 1 & 2 & 3 & 4 & 5 & 6 \\\hline
            $A_n$ & 1.4615 & 1.4524 & 1.4766 & 1.4785 & 1.4803 & 1.4806 
        \end{tabular}
        \caption{The first 6 elements of $A$.}
        \label{tab:elementsofA}
    \end{table}
    
We have shown that $A<\frac{3}{2}=E$. Using the same estimates, we can show that also the remaining values $B,C,D$ are smaller than $\frac{3}{2}$. In other words, we have proved the following theorem.
\begin{theorem}
    The critical exponent of $\xi({\bf p})$ equals $\frac{5}{2}$.
\end{theorem}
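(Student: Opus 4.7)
The plan is to apply Theorem~\ref{Asy:computation} to the uniformly recurrent, aperiodic word $\xi({\bf p})$; that theorem expresses $\cexp(\xi({\bf p}))$ as $1+\sup_n|w_n|/|r_n|$, where $(w_n)$ ranges over all bispecial factors and $r_n$ is a shortest return word to $w_n$. Observation~\ref{obs:BSxi} splits these bispecial factors into the finitely many short exceptions tabulated above (those containing none of $00$, $1011$, $11010$, together with the two special factors $1\xi(1)$ and $1\xi(10)$) and the long ones, which have the form $\xi(w)$ or $01\xi(w)$ for a bispecial $w \in \LL({\bf p})$, and whose shortest return word in $\xi({\bf p})$ has length $|\xi(r)|$, where $r$ is the shortest return word to $w$ in $\bf p$. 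This reduces the computation of $\cexp(\xi({\bf p}))$ to $1 + \max\{A,B,C,D,E\}$.

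The short-bispecial table immediately yields $E = 3/2$, attained for instance by $w = 0110$ with return word $r = 011$. Since $1 + 3/2 = 5/2$, the lower bound $\cexp(\xi({\bf p})) \geq 5/2$ is already in hand, and the matching upper bound will follow if I can show $A, B, C, D < 3/2$.

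For the long-bispecial suprema, Proposition~\ref{prop:BS} organises the long bispecial factors into four infinite families indexed by $n$; the corresponding ratios $|w|/|r|$ involve partial sums of $x_n = |\xi(\varphi^n(012))|$ (and the analogous $|\xi(\varphi^n(01))|$), each satisfying the recurrence $x_{n+1} = 2x_n - x_{n-1} + x_{n-2}$ and hence admitting the closed form $x_n = X_1 \beta_1^n + Y_1 \lambda_1^n + Z_1 \lambda_2^n$ with Perron root $\beta_1 \doteq 1.75488$ and conjugate roots of modulus strictly less than $1$. Substituting and letting $n \to \infty$ in each of $A, B, C, D$ yields the common limit $\beta_1^2/(\beta_1^2 - 1) \doteq 1.481$, which already lies strictly below $3/2$.

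The main obstacle is upgrading these asymptotic values to genuine uniform upper bounds $A, B, C, D < 3/2$. The strategy is the one already executed in detail for $A$: control the oscillatory complex contributions by $|\operatorname{Re}(Y_1 \lambda_1^k)| \leq |Y_1| |\lambda_1|^k$ and exploit $|\lambda_1| < 1$ to produce, for each family, an inequality of the form
\[
(\beta_1^2 - 1)\left(c + 2|Y_1||\lambda_1|\frac{|\lambda_1|^{2N} + 1}{|\lambda_1^2 - 1|}\right) \leq X_1 \beta_1 - 2\beta_1^2 |Y_1| |\lambda_1|^{2N - 1},
\]
valid for all $n \geq N$ with an explicit threshold $N$, and then to handle the finitely many $n < N$ by direct numerical evaluation, mirroring Table~\ref{tab:elementsofA}. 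Once $\max\{A, B, C, D, E\} = E = 3/2$ has been established, Theorem~\ref{Asy:computation} delivers $\cexp(\xi({\bf p})) = 1 + 3/2 = 5/2$.
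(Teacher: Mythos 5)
Your proposal follows essentially the same route as the paper: reduce to Theorem~\ref{Asy:computation}, split the bispecial factors of $\xi({\bf p})$ via Observation~\ref{obs:BSxi} into the short exceptional ones (giving $E=3/2$) and the four infinite families, show each family's ratios are bounded by the common limit $\beta_1^2/(\beta_1^2-1)\doteq 1.481<3/2$ using the Perron-root closed form, the bound $|\operatorname{Re}(Y_1\lambda_1^k)|\le|Y_1||\lambda_1|^k$, and a finite numerical check for small $n$. One minor slip: $E=3/2$ is attained by $w=101$ with shortest return word $r=10$ (reflecting the $\tfrac52$-power $10101$), not by $w=0110$ with $r=011$, which only gives $4/3$.
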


\section{Complexity threshold}
\label{threshsec}

Notice that we used two different constructions of infinite words in the proof of Theorem~\ref{complem1}.
When possible, that is in cases $(b)$, $(c)$, and $(f)$, we use morphic images of arbitrary
ternary square-free words, thus showing that exponentially many binary words have the considered property.
Otherwise, in cases $(a)$, $(d)$, and $(e)$, every bi-infinite binary word with the considered property has the same set of factors as one or a few morphic words.

In this section, we consider the latter case and we relax the constraints on
the critical exponent or the complementary factors in order to get exponential factor complexity.

Let $h$ be the morphism that maps $0 \mapsto 0$ and $1 \mapsto 01$.

\begin{lemma}\label{lemhfib}
 Every bi-infinite binary $4$-free word avoiding pairs of complementary factors of length 4 and $\{1001001,0110110\}$
 has the same set of factors as either $h({\bf f})$ or $\overline{h({\bf f})}$.
\end{lemma}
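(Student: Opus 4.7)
The plan has two parts: first, to verify that $h({\bf f})$ itself satisfies the listed avoidance conditions; second, to show that any bi-infinite $\bf w$ satisfying them shares its factor set with $h({\bf f})$ or $\overline{h({\bf f})}$.

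For the forward direction, I would compute the length-$4$ factors of $h({\bf f})$ directly; they form the set $\{0001,0010,0100,1000,1001\}$, which is disjoint from its own complement. Because $\bf f$ avoids $11$ and because a $1$ produced by $h$ is always followed by the $0$ that begins the next block, $h({\bf f})$ also has no $11$, hence no $0110110$. The factor $1001001$ would imply three $1$'s of $h({\bf f})$ at mutual distance exactly $3$; decomposing the implied context $01001001$ uniquely as $h(1)h(0)h(1)h(0)h(1)$ would then force $10101 \in \LL({\bf f})$, which does not hold. The $4$-freeness of $h({\bf f})$ follows from $\cexp({\bf f}) = 2 + (1 + \sqrt{5})/2 < 4$ together with a short bispecial-factor analysis, in the same spirit as Section~\ref{critsec}.

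For the converse, let $\bf w$ satisfy the hypotheses. The $4$-freeness immediately excludes $0000$ and $1111$, and the remaining $14$ length-$4$ binary words split into $7$ complementary pairs, from each of which $\bf w$ selects at most one. I would enumerate the $2^7 = 128$ possible selections and, by a short backtracking over windows of moderate length incorporating $4$-freeness and the two forbidden length-$7$ factors, eliminate every selection except $\{0001,0010,0100,1000,1001\}$ and its complement. Replacing $\bf w$ by $\overline{\bf w}$ if necessary, assume the former. This factor set implies $11 \notin \LL({\bf w})$ and that every maximal $0$-run of $\bf w$ has length at most $3$; in particular, every $1$ is immediately preceded by a $0$. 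Consequently $\bf w$ admits a unique parse into blocks from $\{h(0), h(1)\} = \{0, 01\}$, yielding ${\bf w} = h({\bf v})$ for some bi-infinite binary $\bf v$.

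Pulling the constraints back to $\bf v$: $h$ is non-erasing, so any $4$-power in $\bf v$ lifts to a $4$-power in $\bf w$, hence $\bf v$ is $4$-free. The exclusions $0000,0101 \notin \LL({\bf w})$ translate to $000,11 \notin \LL({\bf v})$, and the absence of $1001001$ in $\bf w$ gives $10101 \notin \LL({\bf v})$. The remaining task, which I expect to be the principal obstacle, is to show that any aperiodic bi-infinite binary word $\bf v$ avoiding $\{11,000,10101\}$ and all $4$-powers has the same factor set as $\bf f$. I would tackle this by passing to the $0$-run-length encoding of $\bf v$, producing a bi-infinite word $\bf u$ over $\{1,2\}$ that avoids $11$ and inherits enough structure from the constraints on $\bf v$ to force $\LL({\bf u})$ to coincide with the factor set of the $0$-run encoding of $\bf f$; the equality then transfers back to $\LL({\bf v}) = \LL({\bf f})$, and hence to $\LL({\bf w}) = \LL(h({\bf f}))$.
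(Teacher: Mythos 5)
Your reduction follows the same outline as the paper's: identify the length-$4$ factor set of $\bf w$ as that of $h({\bf f})$ up to complementation, parse ${\bf w}=h({\bf v})$, and pull the hypotheses back to conclude that $\bf v$ is a bi-infinite $4$-free word avoiding $\{11,000,10101\}$. The paper reaches this same intermediate point by one computation (the length-$100$ factors extendable to $4$-free words of length $300$ satisfying the hypotheses are exactly the length-$100$ factors of $h({\bf f})$ and $\overline{h({\bf f})}$); your backtracking over the $128$ admissible selections of length-$4$ factors is an acceptable substitute, and your forward verification that $h({\bf f})$ itself satisfies the constraints is correct though not strictly required by the statement.

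The genuine gap is the final step, and you have correctly located it but not closed it. The claim that every bi-infinite $4$-free binary word avoiding $\{11,000,10101\}$ has the same factor set as $\bf f$ cannot be dispatched by the run-length encoding alone. After encoding, you have a bi-infinite word $\bf u$ over $\{1,2\}$ avoiding $11$; that condition is satisfied by every element of $\{2,21\}^{\Zee}$, an uncountable family most of whose members have strictly more factors than the run encoding of $\bf f$. The only remaining leverage is the $4$-freeness of $h({\bf v})$ transported through two coding layers, and converting that into the exact equality of ${\mathcal L}({\bf u})$ with the factor set of the run encoding of $\bf f$ requires either an infinite-descent/rigidity argument (desubstitute repeatedly, deriving a fresh finite set of forbidden factors at each level and showing the process pins down a unique factor set) or an appeal to a known theorem. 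The paper does the latter, invoking Theorem~8 of Baranwal et al.\ (2022) at exactly this point. Your phrase ``inherits enough structure \dots\ to force'' is where the proof is missing: as written, your argument establishes only that $\bf v$ is $4$-free and avoids $\{11,000,10101\}$, not that ${\mathcal L}({\bf v})={\mathcal L}({\bf f})$.
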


\begin{proof}
We compute the set $S$ of factors $y$ such that there exists a binary $4$-free word $xyz$ avoiding pairs of complementary factors of length 4 and $\{1001001,0110110\}$
and such that $|x|=|y|=|z|=100$.
We observe that $S$ consists of the (disjoint) union of the factors of length 100 of $h({\bf f})$ and $\overline{h({\bf f})}$.
Let $\bf w$ be a bi-infinite binary $4$-free word avoiding pairs of complementary factors of length 4 and $\{1001001,0110110\}$.
By symmetry, we suppose that $\bf w$ contains $00$, so that $\bf w$ has the same factors of length 100 as $h({\bf f})$.
So ${\bf w}\in\{0,01\}^\omega$, that is, ${\bf w}=h({\bf v})$ for some bi-infinite binary word $\bf v$.
Since $\bf w$ is $4$-free, so is $\bf v$.
Moreover, by considering the pre-images by $h$ of ${\bf w}=h({\bf v})$ and $h({\bf f})$,
this implies that $\bf v$ and $\bf f$ have the same set of factors of length $100/\max(|h(0)|,|h(1)|)=50$.
In particular, $\bf v$ avoids $\{11,000,10101\}$.
By \cite[Thm.~8]{Baranwal&Currie&Mol&Ochem&Rampersad&Shallit:2022},
$\bf v$ has the same set of factors as $\bf f$.
Thus $\bf w$ has the same set of factors as $h({\bf f})$.
\end{proof}

\begin{theorem}\label{l=4}
Every bi-infinite binary $\tfrac{11}3$-free word that contains no pair
of complementary factors of length $4$ has the same set of factors as  $h({\bf f})$ or the same set of factors as $\overline{h({\bf f})}$.
\end{theorem}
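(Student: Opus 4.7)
The plan is to mirror the proof of Lemma~\ref{lemhfib} with adjusted parameters. Since $\tfrac{11}{3}<4$, every $\tfrac{11}{3}$-free word is automatically $4$-free, so the only additional work relative to the lemma is the absence of the two auxiliary forbidden words $1001001$ and $0110110$ from the hypothesis.

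The cleanest route is to redo the finite-window computation directly. Compute the set $S$ of length-$100$ binary factors $y$ for which there exists a $\tfrac{11}{3}$-free binary word $xyz$ with $|x|=|y|=|z|=100$ and no pair of complementary factors of length $4$, and verify by computer that $S$ equals the disjoint union of the length-$100$ factors of $h({\bf f})$ and of $\overline{h({\bf f})}$. The conclusion then follows by essentially copying the closing argument of Lemma~\ref{lemhfib}. The two parts of $S$ are distinguishable (for instance, $00$ occurs in the factors of $h({\bf f})$ but not in those of $\overline{h({\bf f})}$), so by the $0\leftrightarrow 1$ symmetry we may assume that $\bf w$ has its length-$100$ factors contained in those of $h({\bf f})$. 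This forces ${\bf w}\in\{0,01\}^\omega$, whence ${\bf w}=h({\bf v})$ for some bi-infinite binary $\bf v$. Pulling length-$100$ factors back through $h$ shows that $\bf v$ shares the length-$50$ factors of $\bf f$; in particular, $\bf v$ is $4$-free and avoids $\{11,000,10101\}$. Applying \cite[Thm.~8]{Baranwal&Currie&Mol&Ochem&Rampersad&Shallit:2022} then yields that $\bf v$ has the same set of factors as $\bf f$, and therefore $\bf w$ has the same set of factors as $h({\bf f})$.

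An alternative and more hands-on route is to show directly that the hypothesis already forces avoidance of $\{1001001,0110110\}$, after which Lemma~\ref{lemhfib} applies verbatim. By the $0\leftrightarrow 1$ symmetry it suffices to rule out $1001001$. Supposing it occurs as a factor, its length-$4$ factors $\{1001,0100,0010\}$ force the word to avoid $\{0110,1011,1101\}$. A finite backtracking search that extends $1001001$ step by step on both sides, updating the blacklist with the complement of each new length-$4$ factor and enforcing $\tfrac{11}{3}$-freeness, is expected to terminate at bounded depth with no surviving extension, giving a contradiction; note that the periodic run $(100)^k$ is already choked off at $k=4$, since $10010010010$ has exponent exactly $\tfrac{11}{3}$.

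The main obstacle in either formulation is the finite verification itself: choosing a window size sufficient for $S$ to faithfully determine the factor structure (the value $100$ inherited from the previous lemma is a natural first candidate) and implementing the constraint-propagation search. Modulo that computation, the remainder is the mechanical pull-back through $h$ and a single appeal to \cite[Thm.~8]{Baranwal&Currie&Mol&Ochem&Rampersad&Shallit:2022}.
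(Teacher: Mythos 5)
Your second route is exactly the paper's strategy: show that the hypotheses already force avoidance of $\{1001001,0110110\}$ and then invoke Lemma~\ref{lemhfib} verbatim (noting, as you do, that $\tfrac{11}{3}$-free implies $4$-free). The one soft spot is that you leave the key step --- ruling out $1001001$ --- to a backtracking search whose success you only ``expect.'' The paper closes this step with a short explicit argument that needs no computation, and you already have all the ingredients for it: since $1001001$ occurs, its length-$4$ factors force $\bf w$ to avoid $0110$, $1011$, and $1101$, exactly as you observe. From this, if $11$ occurred then avoiding $0110$ forces $111$, avoiding the $4$-power $1111$ forces $01110$, and avoiding $1101$ and $1011$ extends this to $0011100$, which contains both $0011$ and $1100$, a complementary pair of length $4$ --- contradiction. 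So $\bf w$ avoids $11$; then an occurrence of $101$ would extend to $01010$, containing the complementary pair $0101$ and $1010$, so $\bf w$ avoids $101$ as well. Now $1001001$ together with the avoidance of $11$ and $101$ forces $00100100100=(001)^{11/3}$, contradicting $\tfrac{11}{3}$-freeness. This is precisely the bounded-depth propagation you anticipated, carried out by hand. Your first route (rerunning the window-$100$ computation directly under the $\tfrac{11}{3}$-free constraint) would also work and is self-contained, but it trades a three-line combinatorial argument for a second nontrivial exhaustive search; the paper's choice to factor the computation through Lemma~\ref{lemhfib} means only one such search is ever performed.
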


\begin{proof}
Suppose that such a word $\bf w$ contains $1001001$.

First, suppose that $\bf w$ contains $11$.
Since $\bf w$ avoids $0110=\overline{1001}$, $\bf w$ contains $111$.
Since $\bf w$ avoids the $4$-power $1111$, $\bf w$ contains $01110$.
Since $\bf w$ avoids $1101=\overline{0010}$, $\bf w$ contains $011100$.
Symmetrically, since $\bf w$ avoids $1011=\overline{0100}$, $\bf w$ contains $0011100$.
So $\bf w$ contains both $0011$ and $1100$, a contradiction.
Thus, $\bf w$ avoids $11$.

Now, suppose that $\bf w$ contains $101$.
Since $\bf w$ avoids $11$, $\bf w$ contains $01010$.
So $\bf w$ contains both $0101$ and $1010$, a contradiction.
Thus, $\bf w$ avoids $101$.

Since $\bf w$ contains $1001001$ and avoids $11$, $\bf w$ contains $010010010$.
Since $\bf w$ contains $010010010$ and avoids $101$, $\bf w$ contains $00100100100$.
This is a contradiction, since $00100100100=(001)^{11/3}$.

So $\bf w$ avoids $1001001$.
By symmetry, $\bf w$ avoids $0110110$.

By Lemma~\ref{lemhfib}, $\bf w$ has the same set of factors as either $h({\bf f})$ or $\overline{h({\bf f})}$.
\end{proof}

By contrast, there exist exponentially many binary $\tfrac{11}3^+$-free words
with no pair of complementary factors of length $4$.

\begin{theorem}
The image of any ternary squarefree word by the 31-uniform morphism
\begin{align*}
0 &\rightarrow 0010001001001000100100100010010 \\
1 &\rightarrow 0010001001001000100100100010001 \\
2 &\rightarrow 0010001001001000100100010010010
\end{align*}
is a $\tfrac{11}3^+$-free word containing only $0, 1, 01$, and 
 $10$ as complementary factors.
\label{29o11p}
\end{theorem}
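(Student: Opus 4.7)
The plan is to prove two separate claims about $\mathbf{w}:=g(\mathbf{t})$, where $g$ denotes the $31$-uniform morphism defined in the statement and $\mathbf{t}$ is an arbitrary ternary squarefree word: (i) $\mathbf{w}$ is $(\tfrac{11}{3})^+$-free, and (ii) the set of complementary factors of $\mathbf{w}$ equals $\{0,1,01,10\}$.

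For (i) the strategy is to apply Lemma~\ref{mrs} exactly as in the proof of Theorem~\ref{complem1}(b). Setting $\alpha=a=2$, $b=\tfrac{11}{3}$, $\beta=(\tfrac{11}{3})^+$, and $q=31$, a direct computation gives
\[
t=\max\!\left(\frac{2b}{b-a},\ \frac{2(q-1)(2b-1)}{q(b-1)}\right)=\max\!\left(\frac{22}{5},\ \frac{1140}{248}\right)=\frac{1140}{248}<5,
\]
so the claim reduces to verifying, by a finite computer check, that $g$ is synchronizing and that $g(u)$ is $(\tfrac{11}{3})^+$-free for every ternary squarefree word $u$ of length at most $4$.

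For (ii) the key is to establish two structural facts by inspecting $g(0),g(1),g(2)$: no image contains $11$, and each image decomposes uniquely as a concatenation of blocks from $\{001,0001\}$ possibly followed by a trailing $0$. Since every block (and every image) begins with $00$, it follows that every $1$ inside an image is followed by $0$, and that the next character is again $0$; hence $101$ does not occur inside any image. Combining this with the fact that $g(0),g(1),g(2)$ end with $0,1,0$ respectively while every image begins with $00$, one checks that no $11$ or $101$ is produced across a boundary between images either. Hence $\mathbf{w}$ avoids both $11$ and $101$.

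The conclusion is then immediate. If $x$ is a nonempty factor of $\mathbf{w}$ and $\overline{x}$ is also a factor, then $x$ avoids $\{11,101\}$, and (from the same condition on $\overline{x}$) $x$ also avoids $\{00,010\}$; so $x$ avoids $\{00,11,010,101\}$. A binary word with no $00$ and no $11$ is alternating, and any alternating word of length $\ge 3$ contains $010$ or $101$; hence $|x|\le 2$, leaving only $x\in\{0,1,01,10\}$. All four of these words and their complements are visibly factors of $g(0)$ and hence of $\mathbf{w}$. The main obstacle is the finite computer check needed in (i); step (ii) is entirely structural once the ``no $11$'' and ``no $101$'' observations are in hand.
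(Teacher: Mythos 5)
Your proposal is correct. The paper gives no written proof of this theorem: the $\tfrac{11}{3}^+$-freeness is covered by the blanket remark in the proof of Theorem~\ref{complem1}(b) that Lemma~\ref{mrs} is applied to every uniform-morphism construction in the paper, and your parameter computation ($t=1140/248<5$, so images of squarefree words of length at most $4$ suffice) is exactly what that entails; the complementary-factor claim is implicitly a finite computer check. Where you differ is in replacing that check with a structural argument --- every $1$ in the image is flanked by $00$ on both sides, so the word avoids $11$ and $101$, forcing any complementary factor to have length at most $2$ --- which is hand-verifiable and arguably more illuminating than the brute-force enumeration the paper relies on, at the cost of being specific to this particular morphism. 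Both routes are sound; no gaps.
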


Let $\rho$ be the morphism defined as follows
\begin{align*}
    0 &\rightarrow 01100101101\\
    1 &\rightarrow 0110010\\
    2 &\rightarrow 011001.
\end{align*}

Using the technique of Theorem~\ref{complem1} (d), we can show that $\rho({\bf p})$ is a
$\tfrac52^+$-free word with no pair of complementary factors of length 8 and exactly 40 complementary factors.

\begin{theorem}\label{l=8}
Every bi-infinite binary $\tfrac{29}{11}$-free word that contains no pair
of complementary factors of length $8$ has the same set of factors as either
$\xi({\bf p})$, $\overline{\xi({\bf p})}$, $\xi({\bf p})^R$, $\overline{\xi({\bf p})^R}$,
$\rho({\bf p})$, $\overline{\rho({\bf p})}$, $\rho({\bf p})^R$, or $\overline{\rho({\bf p})^R}$.
\end{theorem}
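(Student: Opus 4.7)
The plan is to parallel Lemma~\ref{lemhfib} (used to prove Theorem~\ref{l=4}) and the optimality argument in Theorem~\ref{complem1}(e), now adapted to accommodate the two non-uniform morphisms $\xi$ and $\rho$ simultaneously together with the symmetries of complementation and reversal. First I will fix a bi-infinite binary $\tfrac{29}{11}$-free word $\bf w$ containing no pair of complementary factors of length~$8$. The list of eight target factor languages is closed under the involutions $x\mapsto\overline{x}$ and $x\mapsto x^R$, and both involutions preserve the hypotheses on $\bf w$, so it will suffice to identify which one of the eight languages contains every length-$N$ factor of $\bf w$ for some suitably large~$N$.

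The central computational step is the following. I will choose $N$ sufficiently large (say $N=200$) and compute the set $S$ of all length-$N$ binary words $y$ for which there exist $x,z$ of length $N$ such that $xyz$ is $\tfrac{29}{11}$-free and contains no pair of complementary factors of length~$8$. In parallel I will compute $S'$, the union of the length-$N$ factor sets of
\[
\xi({\bf p}),\ \overline{\xi({\bf p})},\ \xi({\bf p})^R,\ \overline{\xi({\bf p})^R},\ \rho({\bf p}),\ \overline{\rho({\bf p})},\ \rho({\bf p})^R,\ \overline{\rho({\bf p})^R}.
\]
A backtracking search should verify that $S=S'$, together with a second check that these eight candidate factor sets are pairwise disjoint at length $N$. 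This pins down a unique template for $\bf w$, and by the two symmetries it is enough to treat two cases: the $\xi$-case and the $\rho$-case.

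In the $\xi$-case every length-$N$ factor of $\bf w$ is a factor of $\xi({\bf p})$. An inspection of the length-$N$ factors of $\xi({\bf p})$ will show that the boundaries between consecutive $\xi$-images are unambiguously locatable (a short synchronizing factor such as $00$ suffices), so $\bf w$ admits a unique factorization ${\bf w}=\xi({\bf v})$ for some bi-infinite ternary word $\bf v$. If $\bf v$ contained a cube $u^3$, then $\xi(u^3)=\xi(u)^3$ would be a factor of $\bf w$ of exponent~$3$, contradicting $\tfrac{29}{11}$-freeness since $3>\tfrac{29}{11}$, so $\bf v$ is cubefree. Passing to pre-images of length-$N$ factors of ${\bf w}=\xi({\bf v})$ and of $\xi({\bf p})$ then shows that $\bf v$ and $\bf p$ share the same set of factors of length $M=\lfloor N/\max_i|\xi(i)|\rfloor$. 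Provided $N$ is chosen large enough that $M$ exceeds the length of every forbidden factor listed in \cite[Theorem 14]{Currie&Ochem&Rampersad&Shallit:2022}, that theorem forces $\bf v$ to have the same factor set as $\bf p$, and therefore $\bf w$ has the same factor set as $\xi({\bf p})$. The $\rho$-case proceeds identically with $\xi$ replaced by $\rho$ everywhere.

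The main obstacle is twofold. First, the backtracking search that certifies $S=S'$ and the pairwise disjointness of the eight factor sets at length $N$ is a finite but computationally demanding task; $N$ must be chosen just large enough both to force a unique template and to supply the pre-image length needed by \cite[Theorem 14]{Currie&Ochem&Rampersad&Shallit:2022}. Second, because $\xi$ and $\rho$ are non-uniform (in contrast to the uniform morphism $h$ of Lemma~\ref{lemhfib}), the synchronization step requires an explicit short synchronizing factor that locates image boundaries; exhibiting such a factor for each of $\xi$ and $\rho$ and verifying that it uniquely determines the pre-image is the delicate structural ingredient of the proof.
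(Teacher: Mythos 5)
Your proposal follows essentially the same route as the paper: a finite computer search certifying that every sufficiently extendable factor of bounded length lies in the union of the eight target factor-languages, followed by a synchronization/pre-image argument reducing to \cite[Theorem 14]{Currie&Ochem&Rampersad&Shallit:2022}. The only (inessential) difference is organizational — the paper splits the computation into a cube-free local-to-global lemma for $\xi({\bf p})$ and $\rho({\bf p})$ (with factor lengths $21$ and $63$ and extensions of length $500$) plus a separate check at the $\tfrac{29}{11}$-free level, whereas you merge these into one search — so your argument is correct and matches the paper's.
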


\begin{proof}
First, we show the following. If $\bf w$ is a bi-infinite cube-free binary word and every
factor of length $21$ of $\bf w$ is also a factor of $\xi({\bf p})$, then $\bf w$ has 
the same set of factors as $\xi({\bf p})$.

We compute the set $S$ of factors $f$ such that there exists $u=efg$,
where $u$ is cube-free, every factor of length $21$ of $u$
is a factor of $\xi({\bf p})$, and $|e| = |f| = |g| = 500$.
We verify that every factor $f\in S$ is a factor of $\xi({\bf p})$.

This means that ${\bf w}=\xi({\bf v})$ for some bi-infinite ternary word $\bf v$.
Moreover, by considering the pre-images by $\xi$ of ${\bf w}=\xi({\bf v})$
and $\xi({\bf p})$, this implies that $\bf v$ and $\bf p$ have the same set
of factors of length $500/\max(|\xi(0)|,|\xi(1)|,|\xi(2)|)=125$.
In particular, $\bf v$ has the same set of factors as $\bf p$ by \cite[Theorem 14]{Currie&Ochem&Rampersad&Shallit:2022}.

Similarly, we show that if $\bf w$ is a bi-infinite cube-free binary word and every
factor of length $63$ of $\bf w$ is also a factor of $\rho({\bf p})$, then $\bf w$ has 
the same set of factors as $\rho({\bf p})$.

Finally, we compute the set $X$ of factors $f$ such that there exists $u=efg$,
where $u$ is $\tfrac{29}{11}$-free, $u$ contains no pair
of complementary factors of length $8$, $|f| = 200$, and $|e| = |g| = 80$.
We verify that every factor $f\in X$ is a factor of either
$\xi({\bf p})$, $\overline{\xi({\bf p})}$, $\xi({\bf p})^R$, $\overline{\xi({\bf p})^R}$,
$\rho({\bf p})$, $\overline{\rho({\bf p})}$, $\rho({\bf p})^R$, or $\overline{\rho({\bf p})^R}$. Since $\tfrac{29}{11}$-free words are cube-free,
Theorem~\ref{l=8} follows by the two previous results and the symmetries by complement and reversal.
\end{proof}

By contrast, there exist exponentially many binary $\tfrac{29}{11}^+$-free words containing no pair
of complementary factors of length $8$ and exactly 36 complementary factors.

\begin{theorem}
The image of any ternary squarefree word by the 84-uniform morphism
{\footnotesize
\begin{align*}
0 &\rightarrow 100101100100110010100101100101001011001001100101100101001011001001100101100100110010 \\
1 &\rightarrow 100101100100110010100101100101001011001001100101100101001011001001100101001011001001 \\
2 &\rightarrow 100101100100110010100101100101001011001001100101100100110010100101100100110010110010
\end{align*}
}
is a $\tfrac{29}{11}^+$-free word with no pair of complementary factors of length $8$ and exactly 36 complementary factors.
\label{29o11p2}
\end{theorem}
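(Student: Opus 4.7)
The plan is to mimic the uniform-morphism arguments used in the proofs of Theorem~\ref{complem1}(b), (c), (f) and Theorem~\ref{29o11p}: apply Lemma~\ref{mrs} for the critical-exponent bound and perform a finite case analysis for the two complementary-factor conditions. Denote the $84$-uniform morphism given in the statement by $h$.

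For the $\tfrac{29}{11}^+$-freeness, I would invoke Lemma~\ref{mrs} with $\alpha = 2$, $\beta = \tfrac{29}{11}^+$, $q = 84$, $a = 2$, and $b = 29/11$. The lemma's bound evaluates to
\begin{equation*}
t = \max\!\left(\frac{2b}{b-a},\ \frac{2(q-1)(2b-1)}{q(b-1)}\right) = \max\!\left(\frac{58}{7},\ \frac{7802}{1512}\right) = \frac{58}{7} < 9,
\end{equation*}
so it suffices to verify by computer that $h$ is synchronizing and that $h(w)$ is $\tfrac{29}{11}^+$-free for every ternary squarefree word $w$ with $|w| \leq 8$. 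Lemma~\ref{mrs} then transfers the conclusion to $h({\bf w})$ for every ternary squarefree ${\bf w}$.

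For the absence of a length-$8$ complementary pair, I would use that $h$ is $84$-uniform with $84 \geq 8$, so every length-$8$ factor of $h({\bf w})$ sits inside $h(u)$ for some length-$2$ factor $u$ of ${\bf w}$. Enumerate the finite set $F_8$ of length-$8$ factors of $h(u)$ over the six squarefree words $u \in \{0,1,2\}^2$ and verify that $F_8 \cap \overline{F_8} = \emptyset$. For the exact count of $36$ complementary factors, observe first that any complementary pair $\{x,\overline x\}$ with $|x| \geq 8$ would restrict to the complementary pair of length-$8$ prefixes of $x$ and $\overline x$, contradicting the previous step; so every complementary factor has length at most $7$. Then for each $\ell \in \{1,\ldots,7\}$, enumerate the length-$\ell$ factors of $h(u)$ over all squarefree $u \in \{0,1,2\}^2$, identify those $x$ whose complement $\overline x$ also appears, and verify that the total count is exactly $36$.

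The main obstacle is computational rather than conceptual: all three steps reduce to finite enumerations whose correctness depends on accurate bookkeeping, and in particular on verifying that $h$ meets the synchronization hypothesis of Lemma~\ref{mrs}, which is the only nontrivial input to that lemma besides the short-word freeness check.
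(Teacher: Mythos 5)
Your proposal is correct and matches the paper's (implicit) approach: the authors state that every $\beta^+$-freeness claim for a uniform-morphism image is verified via Lemma~\ref{mrs} exactly as in Theorem~\ref{complem1}(b), and the complementary-factor conditions reduce to the same finite enumeration over images of length-$2$ squarefree words that you describe. Your computation of $t = \max(58/7,\ 7802/1512) = 58/7 < 9$ and the prefix-restriction argument bounding complementary factors to length at most $7$ are both sound.
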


We also get exponentially many words if we allow complementary factors of length~8.

\begin{theorem}
The image of any ternary squarefree word by the 31-uniform morphism
\begin{align*}
0 &\rightarrow 0010100110010011001010011001011 \\
1 &\rightarrow 0010100101100101001100101001011 \\
2 &\rightarrow 0010011001011001010010110010011
\end{align*}
is a $\tfrac{5}{2}^+$-free word with no pair of complementary factors of length $9$ and exactly 40 complementary factors.
\label{l=9}
\end{theorem}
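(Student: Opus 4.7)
The plan is to combine the synchronizing-morphism machinery of Lemma~\ref{mrs} for the repetition bound with a direct finite enumeration for the complementary-factor statements, mirroring the approach used in the proofs of Theorem~\ref{complem1}(b,c,f) and Theorem~\ref{29o11p2}.

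For $\tfrac52^+$-freeness, I apply Lemma~\ref{mrs} with $a=2$, $\alpha=2$, $b=\tfrac52$, $\beta=\tfrac52^+$, and $q=31$. The resulting threshold is
\[
t = \max\!\left(\frac{2\cdot 5/2}{5/2 - 2},\; \frac{2\cdot 30\cdot(2\cdot 5/2 - 1)}{31\cdot(5/2 - 1)}\right) = \max(10,\, 480/93) = 10.
\]
It then suffices to verify two finite conditions: that the given $31$-uniform morphism is synchronizing on $\{0,1,2\}^*$, and that its image on every ternary squarefree word of length at most $10$ is $\tfrac52^+$-free. Both are routine mechanical checks. Since there are exponentially many finite ternary squarefree words~\cite{Shur:2012}, this also delivers the ``exponentially many'' half of the conclusion automatically.

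For the complementary-factor statements, let $h$ denote the morphism of the theorem. The key observation is that any factor of length $k \leq 31$ of $h(\mathbf{w})$, for any ternary word $\mathbf{w}$, is contained in the image of at most two consecutive letters of $\mathbf{w}$. Hence every length-$k$ factor appearing in the image of any infinite ternary squarefree word lies in $h(a)h(b)$ for some squarefree pair $ab \in \{01,02,10,12,20,21\}$. For each of these six pairs I enumerate the length-$9$ factors of the $62$-letter string $h(a)h(b)$; the union $F_9$ of these sets is a small, explicit finite set. Verifying by inspection that $\overline{x} \notin F_9$ whenever $x \in F_9$ excludes any complementary pair of length $9$, which forces every complementary factor to have length at most~$8$.

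Finally, I enumerate in the same way $F_k$ for $1 \leq k \leq 8$ and count $|\{x \in F \colon \overline{x}\in F\}|$, where $F = \bigcup_{k=1}^{8} F_k$; this should return exactly $40$. The main obstacle is purely computational: the synchronizing-lemma hypothesis check and the factor-counting enumeration must be carried out without error, but each is routinely handled by a short backtracking program over the six length-$62$ words $h(a)h(b)$. No new ideas are needed beyond those already developed earlier in the paper.
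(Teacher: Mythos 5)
Your proposal is correct and matches the paper's intended argument: the paper gives no explicit proof of Theorem~\ref{l=9}, but states after Theorem~\ref{complem1}(b) that the $\beta^+$-freeness of every morphic image in the paper is verified via Lemma~\ref{mrs} (your parameters $a=2$, $b=\tfrac52$, $q=31$, $t=10$ are the right ones), and the complementary-factor claims are exactly the kind of finite check over images of adjacent letter pairs that the paper performs elsewhere. The only point worth making explicit is that the count of \emph{exactly} $40$ requires every squarefree pair $ab$ to actually occur in the ternary word (true for every sufficiently long ternary squarefree word), so the enumeration over all six pairs $h(a)h(b)$ is a lower bound as well as an upper bound.
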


\end{document}